\documentclass[12pt]{amsart}
 \usepackage{amssymb,amsmath,amsfonts,latexsym,setspace}
 \usepackage{bm}
 \usepackage{array,graphics,color}
 \setlength{\textheight}{600pt} \setlength{\textwidth}{475pt}
 \oddsidemargin -0mm \evensidemargin -0mm \topmargin -0pt

%----------------------------
%---------------------------
 \theoremstyle{plain}
 \newtheorem{propn}{Proposition}[section]
 \newtheorem{thm}[propn]{Theorem}
 \newtheorem{lemma}[propn]{Lemma}
 \newtheorem{cor}[propn]{Corollary}
 \newtheorem*{thm*}{Theorem}
 \theoremstyle{definition}
 \newtheorem{defn}[propn]{Definition}

 \theoremstyle{remark}
 \newtheorem*{rem}{Remark}

 \newcommand{\Hil}{\mathcal{H}}
 \newcommand{\q}{\mathcal{Q}}
 \newcommand{\s}{\mathcal{S}}

\newcommand{\clb}{\mathcal{B}}

\newcommand{\cle}{\mathcal{E}}

\newcommand{\clg}{\mathcal{G}}
\newcommand{\clh}{\mathcal{H}}

\newcommand{\cll}{\mathcal{L}}
\newcommand{\clm}{\mathcal{M}}

\newcommand{\clo}{\mathcal{O}}

\newcommand{\clq}{\mathcal{Q}}

\newcommand{\cls}{\mathcal{S}}

\newcommand{\clw}{\mathcal{W}}

\newcommand{\z}{\bm{z}}
\newcommand{\w}{\bm{w}}
\newcommand{\raro}{\rightarrow}

 %set of numbers

\newcommand{\Nat}{\mathbb{N}}
\newcommand{\Comp}{\mathbb{C}}

 \newcommand{\D}{\mathbb{D}}
 
% tensor product
 \newcommand{\ot}{\otimes}
 \newcommand{\ovt}{\bar{\ot}}
 \newcommand{\wt}{\widetilde}

%%%%%%%%%%%%%Theorem Environment%%%%%%%%%%%%%%%%%%%%%%
%\newtheorem{Theorem}{\sc Theorem}[section]
%\newtheorem{Lemma}[Theorem]{\sc Lemma}
%\newtheorem{Proposition}[Theorem]{\sc Proposition}
%\newtheorem{Corollary}[Theorem]{\sc Corollary}
%\newtheorem{Definition}[Theorem]{\sc Definition}
%\newtheorem{Example}[Theorem]{\sc Example}
%\newtheorem{Remark}[Theorem]{\sc Remark}
%\newtheorem{Note}[Theorem]{\sc Note}
%\newtheorem{Question}{\sc Question}
%\newtheorem{ass}[Theorem]{\sc Assumption}
%\newtheorem{Definition}[Theorem]{\sc Definition}
%\newcommand{\bt}{\begin{Theorem}}
%\def\beginlem{\begin{Lemma}}
%\def\beginprop{\begin{Proposition}}
%\def\begincor{\begin{Corollary}}
%\def\begindef{\begin{Definition}}
%\def\beginexamp{\begin{Example}}
%\def\beginrem{\begin{Remark}}
%\def\beginq{\begin{Question}}
%\def\beginass{\begin{ass}}
%\def\beginnote{\begin{Note}}
%\newcommand{\et}{\end{Theorem}}
%\def\endlem{\end{Lemma}}
%\def\endprop{\end{Proposition}}
%\def\endcor{\end{Corollary}}
%\def\enddef{\end{Definition}}
%\def\endexamp{\end{Example}}
%\def\endrem{\end{Remark}}
%\def\endq{\end{Question}}
%\def\endass{\end{ass}}
%\def\endnote{\end{Note}}

 \begin{document}

 \title{Tensor product of quotient Hilbert modules}

 \author[Chattopadhyay] {Arup Chattopadhyay}
 \address{%\vskip10pt
 (A. Chattopadhyay) Indian Statistical Institute \\ Statistics and
 Mathematics Unit \\ 8th Mile, Mysore Road \\ Bangalore \\ 560059 \\
 India}

 \email{arup@isibang.ac.in, 2003arupchattopadhyay@gmail.com}

 \author[Das] {B. Krishna Das}

 \address{%\vskip10pt
 (B. K. Das) Indian Statistical Institute \\ Statistics and
 Mathematics Unit \\ 8th Mile, Mysore Road \\ Bangalore \\ 560059 \\
 India}
 \email{dasb@isibang.ac.in, bata436@gmail.com}

  \author[Sarkar]{Jaydeb Sarkar}

 \address{%\vskip10pt
 (J. Sarkar) Indian Statistical Institute \\ Statistics and
 Mathematics Unit \\ 8th Mile, Mysore Road \\ Bangalore \\ 560059 \\
 India}

 \email{jay@isibang.ac.in, jaydeb@gmail.com}

\subjclass[2010]{47A13, 47A15, 47A20, 47A45, 47A80, 47B32, 47B38,
46E20, 30H10} \keywords{Hilbert modules, Hardy and weighted Bergman
spaces over polydisc, submodules, quotient modules, doubly commuting
quotient modules, essential normality, wandering subspace, rank}

\begin{abstract}
In this paper, we present a unified approach to problems of tensor
product of quotient modules of Hilbert modules over $\mathbb{C}[z]$
and corresponding submodules of reproducing kernel Hilbert modules
over $\mathbb{C}[z_1, \ldots, z_n]$ and the doubly commutativity
property of module multiplication operators by the coordinate
functions. More precisely, for a reproducing kernel Hilbert module
$\clh$ over $\mathbb{C}[z_1, \ldots, z_n]$ of analytic functions on
the polydisc in $\mathbb{C}^n$ which satisfies certain conditions,
we characterize the quotient modules $\q$ of $\clh$ such that $\q$
is of the form $\q_1 \otimes \cdots \otimes \q_n$, for some one
variable quotient modules $\{\q_1, \ldots, \q_n\}$. For $\clh$ the
Hardy module over polydisc $H^2(\mathbb{D}^n)$, this reduces to some
recent results by Izuchi, Nakazi and Seto and the third author. This
is used to obtain a classification of co-doubly commuting submodules
for a class of reproducing kernel Hilbert modules over the unit
polydisc. These results are applied to compute the cross commutators
of co-doubly commuting submodules. This is used to give further
insight into the wandering subspaces and ranks of submodules of the
Hardy module case. Our results includes the case of weighted Bergman
modules over the unit polydisc in $\mathbb{C}^n$.
\end{abstract}

\maketitle

\section{Introduction.}\label{sec: intro}

The question of describing the invariant and co-invariant subspaces
of shift operators on various holomorphic functions spaces is an old
subject that essentially began with the work of A. Beurling
\cite{B}. The analogous problems for holomorphic function spaces in
several variables have been considered in the work by Ahern,
Douglas, Clark, Yang, Guo, Nakazi, Izuchi, Seto and many more (see
\cite{AC}, \cite{ACD}, \cite{DY1}, \cite{douglas2}, \cite{GZ},
\cite{GY}, \cite{bshift}, \cite{SSW}, \cite{Y6}).

In this paper, we will examine certain joint invariant and
co-invariant subspaces of the multiplication operators by the
coordinate functions defined on a class of reproducing kernel
Hilbert spaces on the unit polydisc $\mathbb{D}^n = \{(z_1, \ldots,
z_n) : |z_i| < 1, i = 1, \ldots, n\}$. More precisely, our main
interest is the class of quotient Hilbert modules of reproducing
kernel Hilbert modules over $\mathbb{C}[z_1, \ldots, z_n]$, the ring
of polynomials of $n$ commuting variables, that admit a simple
tensor product representation of quotient modules of Hilbert modules
over $\mathbb{C}[z]$. A related problem also arises in connection
with the submodules and quotient modules of modules over
$\mathbb{C}[z_1, \ldots, z_n]$ in commutative algebra:

\noindent Let $n \in \mathbb{N}$ be a fixed positive integer and
$\{\clm_i\}_{i=1}^n$ be a family of modules over the ring of one
variable polynomials $\mathbb{C}[z]$. Then the vector space tensor
product $\clm:= \clm_1 \otimes_{\mathbb{C}} \cdots
\otimes_{\mathbb{C}} \clm_n$ is a module over $\mathbb{C}[z]
\otimes_{\mathbb{C}} \cdots \otimes_{\mathbb{C}} \mathbb{C}[z] \cong
\mathbb{C}[z_1, \ldots, z_n]$. Here the module action on $\clm$ is
given by \[(p_1 \otimes \cdots \otimes p_n) \cdot (f_1 \otimes
\cdots \otimes f_n) \mapsto p_1 \cdot f_1 \otimes \cdots \otimes p_n
\cdot f_n,\]where $\{p_i\}_{i=1}^n \subseteq \mathbb{C}[z]$ and $f_i
\in \clm_i$ ($1 \leq i \leq n$). Furthermore, let $\clq_i \subseteq
\clm_i$ be a quotient module of $\clm_i$ for each $1 \leq i \leq n$.
Then\begin{equation}\label{T-Q} \clq_1 \otimes_{\mathbb{C}} \cdots
\otimes_{\mathbb{C}} \clq_n,\end{equation} is a quotient module of
$\clm$. Now let $\clq$ be a quotient module and $\s$ be a submodule
of $\clm$. The following question arises naturally in the context of
tensor product of quotient modules.

\noindent \textsf{(A)} when is $\clq$ of the form (\ref{T-Q}).

The next natural question is:

\noindent\textsf{(B)} when is $\clm/\s$ of the form (\ref{T-Q}).

\noindent To the best of our knowledge, this is a mostly unexplored
area at the moment.

Our principal concern in this paper is to provide a complete answer
to the above problem by considering a natural class of reproducing
kernel Hilbert modules over $\mathbb{C}[z]$ replacing the modules in
the algebraic set up. In particular, we prove that a quotient module
$\clq$ of a \textit{standard reproducing kernel Hilbert module} (see
Definition \ref{anal-defn}) over $\mathbb{C}[z_1, \ldots, z_n]$ is
of the form
\[\clq = \clq_1 \otimes \cdots \otimes \clq_n,\]
for $n$ ``one-variable'' quotient modules $\{\q_i\}_{i=1}^n$ if and
only if $\clq$ is \textit{doubly commuting} (see Definition
\ref{DCD}).

The study of the doubly commuting quotient modules, restricted to
the Hardy module over the bidisc $H^2(\mathbb{D}^2)$, was initiated
by Douglas and Yang in \cite{DY1} (also see Berger, Coburn and Lebow
\cite{BCL}). Later in \cite{bshift} Izuchi, Nakazi and Seto obtained
the above classification result only for quotient modules of the
Hardy module $H^2(\mathbb{D}^2)$. More recently, the third author
extended this result to $H^2(\mathbb{D}^n)$ for any $n \geq 2$ (see
\cite{JS1}, \cite{sarkar}).

One of the difficulties in extending the above classification result
from the Hardy module to the setting of a reproducing kernel Hilbert
module $\clh$ is that the module maps $\{M_{z_1}, \ldots, M_{z_n}\}$
on $\clh$, the multiplication operators by the coordinate functions,
are not isometries. This paper overcomes such a difficulty by
exploiting the precise geometric and algebraic structure of tensor
product of reproducing kernel Hilbert modules. In what follows we
develop methods which link the tensor product of Hilbert modules
over $\mathbb{C}[z_1, \ldots, z_n]$ to Hilbert modules over
$\mathbb{C}[z]$.

We also consider the issue of essentially doubly commutativity of
co-doubly commuting submodules of analytic reproducing Hilbert
modules over $\mathbb{C}[z_1, \ldots, z_n]$. We also obtain a
wandering subspace theorem for some co-doubly commuting submodules
of weighed Bergman modules over $\mathbb{C}[z_1, \ldots, z_n]$ and
compute the rank of co-doubly commuting submodules of $H^2(\D^n).$
Our results in this paper are new even in the case of weighted
Bergman spaces over $\mathbb{D}^n$.

We now describe the contents of the paper. After recalling the
notion of reproducing kernel Hilbert modules in Section \ref{sec:1},
we introduce the class of standard Hilbert modules over
$\mathbb{C}[z_1, \ldots, z_n]$ in Section~\ref{sec:2}. Furthermore,
we obtain some basic properties and an useful classification result
for the class of standard Hilbert modules. In Section~\ref{sec:3},
we obtain a characterization of doubly commuting quotient modules of
an analytic Hilbert modules over $\mathbb{C}[\z]$. In Section 4, we
present a characterization result for co-doubly commuting submodules
and compute the cross commutators of a co-doubly commuting
submodule. In section 5, we prove an wandering subspace theorem for
co-doubly commuting submodules of the weighted Bergman modules over
$\D^n$. We also compute the rank of a co-doubly commuting submodules
of $H^2(\D^n)$. The final section is reserved for some concluding
remarks.

\vspace{0.2in}

\noindent\textsf{Notations:}

\begin{itemize}

\item Throughout this paper $n \geq 2$ is a fixed natural number.

\item  For a Hilbert space $\clh$, the set of all bounded linear
operators on $\clh$ is denoted by $\clb(\clh)$.

\item We denote by $\ot$ the Hilbert space tensor product and by
$M\ovt N$, the von-Neumann algebraic tensor product of von-Neumann
algebras $M$ and $N$.

\item For a von-Neumann algebra $M\subseteq \clb(\Hil)$, we denote by
$M'$ the commutant of $M$ that is the von-Neumann algebra of all
operators in $\clb(\Hil)$ which commutes with all the operators in
$M$.

\item For Hilbert space operators $R, T \in \clb(\clh)$, we write
$[R,T] = RT-TR$, the commutator of $R$ and $T$.

\item  For any set $E$, we denote by $\#E$ the cardinality of the
set $E$.

\item For a closed subspace $\cls$ of a Hilbert space $\clh$, we
denote by $P_{\cls}$ the orthogonal projection of $\clh$ onto
$\cls$.

\item For a Hilbert space $\cle$ we shall let $\clo(\mathbb{D}^n,
\cle)$ denote the space of $\cle$-valued holomorphic functions on
$\D^n$.

\item $\mathbb{C}[\z] := \mathbb{C}[z_1, \ldots, z_n]$ denotes the polynomial ring
over $\mathbb{C}$ in $n$ commuting variables

\end{itemize}

\section{Preliminaries}\label{sec:1}

In this section we gather together some known results on reproducing
kernel Hilbert spaces on product domains in $\mathbb{C}^n$. We start
by recalling the notion of a Hilbert module over $\mathbb{C}[\z]$.

Let $\{T_1, \ldots, T_n\}$ be a set of $n$ commuting bounded linear
operators on a Hilbert space $\clh$. Then the $n$-tuple $(T_1,
\ldots, T_n)$ turns $\clh$ into a module over $\mathbb{C}[\bm{z}]$
in the following sense:
\[\mathbb{C}[\bm{z}] \times \clh \raro \clh, \quad \quad (p, h)
\mapsto p(T_1, \ldots, T_n)h,\]where $p \in \mathbb{C}[\bm{z}]$ and
$h \in \clh$. We say that the module $\clh$ is a \textit{Hilbert
module} over $\mathbb{C}[\bm{z}]$ (see \cite{DP}, \cite{JS}). Denote
by $M_p: \clh \raro \clh$ the bounded linear operator
\[
M_p h = p \cdot h = p(T_1, \ldots, T_n)h, \quad \quad(h \in \clh)
\]
for $p \in \mathbb{C}[\z]$. In particular, for $p = z_i \in
\mathbb{C}[\z]$ we obtain the \textit{module multiplication}
operators as follows:
%$\{M_{z_j}\}_{j=1}^n$ by the coordinate functions
%$\{z_j\}_{j=1}^n$
\[M_{z_i} h = z_i(T_1, \ldots, T_n) h = T_i h \quad \quad(h \in
\clh,\, 1 \leq i \leq n).\] In what follows, we will use the notion
of a Hilbert module $\clh$ over $\mathbb{C}[\z]$ in place of an
$n$-tuple of commuting operators $\{T_1, \ldots, T_n\} \subseteq
\clb(\clh)$, where the operators are determined by module
multiplication by the coordinate functions, and vice versa.

\noindent A function $K: \D^n \times \D^n \rightarrow \mathbb{C}$ is
said to be \textit{positive definite kernel} (cf. \cite{Ar},
\cite{JS}) if
\[ \sum_{i, j = 1}^{k} \overline{\lambda}_i \lambda_j K(\z_i, \z_j) >
0,\] for all $\{\lambda_i\}_{i=1}^k \subseteq \mathbb{C}$,
$\{\z_i\}_{i=1}^k \subseteq \D^n$ and $k \in \mathbb{N}$. Given a
positive definite kernel $K$ on $\D^n$, the scalar-valued
reproducing kernel Hilbert space $\clh_K$ is the Hilbert space
completion of $\mbox{span}\{K(\cdot, \w) : \w \in \D^n\}$
corresponding to the inner product
\[\langle K(\cdot, \w), K(\cdot, \z)\rangle_{\clh_K} = K(\z,
\w). \quad \quad (\z, \w \in \D^n)\]The kernel function $K$ has the
reproducing property:
\[f(\w) = \langle f, K(\cdot, \w)\rangle_{\clh_K}. \quad \quad (f \in \clh_K, \w \in
\mathbb{D}^n)\]In particular, for each $\w \in \D^n$ the evaluation
operator $\bm{ev}_{\w} : \clh_K \raro \mathbb{C}$ defined by
$\bm{ev}_{\w}(f) = \langle f, K(\cdot, \w) \rangle_{\clh_K}$ ($f \in
\clh_K$) is bounded. We say that $\clh_K$ is the \textit{reproducing
kernel Hilbert space} over $\D^n$ with respect to the kernel
function $K$.

We now assume that the function $K$ is holomorphic in the first
variable and anti-holomorphic in the second variable. Then $\clh_K$
is a Hilbert space of holomorphic functions on $\D^n$ (cf.
\cite{JS}). Moreover, $\clh_K$ is said to be
\textit{reproducing kernel Hilbert module} over $\mathbb{C}[\z]$ if $1 \in
\clh_K \subseteq \clo(\D^n, \mathbb{C})$ and the module
multiplication operators $\{M_{z_i}\}_{i=1}^n$ are given by the multiplication by the
coordinate functions, that is
\[M_{z_i} f = z_i f,\]and \[(z_i f)(\w) = w_i f(\w), \quad \quad (f \in \clh_K, \w \in
\D^n)\]for $i = 1, \ldots, n$. It is easy to verify that
\[M_{z_i}^* K(\cdot, \w) = \bar{w}_i K(\cdot, \w),
\quad\quad (\w \in \D^n)\]for $i=1, \ldots, n$.

Let $\{\clh_{K_i}\}_{i=1}^n$ be a collection of reproducing kernel
Hilbert modules over $\D$ corresponding to the positive definite
kernel functions $K_i:\D \times \D \to\Comp$, $i=1,\dots,n$. Thus
 \[
  K (\z, \w)= \prod\limits_{i=1}^n K_i(z_i,w_i), \quad \quad (\z, \w
  \in \D^n)
 \]defines a positive definite kernel on $\D^n$ (cf. \cite{T}, \cite{Ar}).
Observe that $\Hil_{K_1} \ot \cdots \ot \Hil_{K_n}$ can be viewed as
a reproducing kernel Hilbert module over $\Comp[\z]$ in the
following sense:
\[\mathbb{C}[\z]\times(\Hil_{K_1}\ot\cdots\ot\Hil_{K_n}) \to
\Hil_{K_1}\ot\cdots\ot\Hil_{K_n} , \quad (p,f) \mapsto p(M_1,\ldots
, M_n)f,\]where $M_i \in B(\Hil_{K_1}\ot\cdots\ot\Hil_{K_n})$, and
\[M_i :=
 I_{\Hil_{K_1}}\otimes \cdots \otimes \underbrace{M_z}\limits_{\textup{i-th place}}
 \otimes \cdots \otimes I_{\Hil_{K_n}}.
 \quad \quad (1 \leq i \leq n)\]
Moreover, it also follows immediately from the definition of $K$
that
\begin{equation*}
\begin{split}
& \left\|\sum\limits_{i=1}^m a_i K(\cdot, \w_i)\right\|^2 =
 \sum\limits_{i=1}^m \sum\limits_{j=1}^m a_i\bar{a}_jK(\w_i,\w_j)
 = \sum\limits_{i=1}^m \sum \limits_{j=1}^m a_i\bar{a}_j
 \Big(\prod\limits_{l=1}^n K_l\big((\w_i)_l, (\w_j)_l\big)\Big)\\
 & = \sum\limits_{i=1}^m \sum\limits_{j=1}^m a_i\bar{a}_j\Big\langle
 K_1(\cdot, (\w_j)_1) \otimes \cdots \otimes
 K_n(\cdot, (\w_j)_n), K_1(\cdot, (\w_i)_1) \otimes \cdots \otimes
 K_n(\cdot, (\w_i)_n)\Big\rangle\\
 & = \left\|\sum\limits_{i=1}^m a_i K_1(\cdot, (\w_i)_1) \otimes \cdots \otimes
 K_n(\cdot, (\w_i)_n)\right\|^2,
 \end{split}
 \end{equation*}where $\{\w_i = \big((\w_i)_1, \ldots,
(\w_i)_n\big):
 1 \leq i \leq m\} \subseteq \D^n$ and
 $\{a_i\}_{i=1}^m \subseteq \mathbb{C}$ and $m \in \mathbb{N}$. Therefore, the map \[U: \mbox{span} \{K(\cdot, \w) :
 \w \in \D^n\} \longrightarrow \mbox{span}
 \{ K_1(\cdot, w_1)\otimes \cdots \otimes K_n(\cdot, w_n): \w \in \D^n\}\]
 defined by \[U K(\cdot, \w) = K_1(\cdot, w_1)\otimes \cdots \otimes K_n(\cdot,
 w_n), \quad \quad \left(\w \in \D^n\right)\]
 extends to a unitary operator from $\clh_K$ onto $\clh_{K_1}
 \otimes \cdots \otimes \clh_{K_n}$. We also have $$M_{z_i} = U^* M_i U \quad \quad (1 \leq i \leq n).$$
This implies that $\clh_K\cong \clh_{K_1}\ot\cdots\ot\clh_{K_n}$ is
a reproducing kernel Hilbert module over $\mathbb{C}[\z]$.

\textit{In what follows we identify the Hilbert tensor product of
Hilbert modules $\clh_{K_1}\ot\cdots\ot\clh_{K_n}$ with the Hilbert
module $\clh_K$ over $\mathbb{C}[\z]$. It also enables us to
identify $z^{k_1} \otimes \cdots \otimes z^{k_n}$ with $z^{\bm{k}}$
for all $\bm{k}=(k_1,\cdots,k_n) \in \mathbb{N}^n$.}

We now recall the definitions of submodules and quotient modules of
reproducing kernel Hilbert modules over $\mathbb{C}[\z]$ to be used
in this paper:

\noindent Let $\s$ and $\q$ be a pair of closed subspaces of
$\clh_K$. Then $\s$ is a \textit{submodule} of $\clh_K$ if
$M_{z_i}\s\subseteq\s$ for all $i=1,\dots,n$ and $\q$ is a
\textit{quotient module} if $\q^{\perp}(\cong \clh_K/\q)$ is a
submodule of $\clh_K$. The module multiplication operators on the
submodule $\s$ and the quotient module $\q$ of $\Hil_K$ are given by
restrictions $(R_{z_1},\dots,R_{z_n})$ and compressions
$(C_{z_1},\dots, C_{z_n})$ of the module multiplication operators
$(M_{z_1},\dots,M_{z_n})$ on $\clh_K$:
\begin{equation}
R_{z_i}:= M_{z_i}|_{\s}\quad \text{and} \quad
C_{z_i}:=P_{\q}M_{z_i}|_{\q}, \label{czi}
\end{equation}
for $i=1,\dots,n$.

\begin{defn}\label{DCD}
A quotient module $\q$ of $\Hil_K$ is \emph{doubly commuting} if for
$1\le i<j\le n$,
\[
C_{z_i}C_{z_j}^*=C_{z_j}^*C_{z_i}.
\]
A submodule $\s$ of $\Hil_K$ is \emph{doubly commuting} if for $1\le
i <j\le n$,
\[
 R_{z_i}R_{z_j}^*=R_{z_j}^*R_{z_i},
\]
and it is \emph{co-doubly commuting} if
the quotient module $\s^{\perp} (\cong \clh_K/\cls)$ is doubly commuting.
\end{defn}

The notion of a co-doubly commuting submodule was introduced in
\cite{JS1} and \cite{sarkar} in the context of Hardy module over
$\mathbb{D}^n$. However, the interplay between the doubly commuting
quotient modules and the co-doubly commuting submodules has also
been previously used by Izuchi, Nakazi and Seto and Yang \cite{III},
\cite{bshift}, \cite{Y5}, \cite{Y6}.

We end this preliminary section by recalling a result concerning
commutant of von-Neumann algebras (cf. Theorem 5.9, Chapter-IV of
~\cite{takesaki}) which will be used in later sections.

\begin{thm} \label{commutant}
Let $M$ and $N$ be two von-Neumann algebras. Then $(M\ovt N)'=M'\ovt
N'$.
\end{thm}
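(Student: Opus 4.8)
The plan is to derive this from the Tomita--Takesaki commutation theorem. One inclusion is elementary and I would dispose of it first: any elementary tensor $a'\ot b'$ with $a'\in M'$ and $b'\in N'$ commutes with every $a\ot b$ ($a\in M$, $b\in N$), hence with the von-Neumann algebra $M\ovt N$ generated by the latter, and passing to the von-Neumann algebra generated by the $a'\ot b'$ gives $M'\ovt N'\subseteq(M\ovt N)'$. By the bicommutant theorem the asserted identity is therefore equivalent to the reverse inclusion $(M\ovt N)'\subseteq M'\ovt N'$, and that is the substantive part.

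To prove it I would first handle the case in which $M\subseteq\clb(\clh)$ and $N\subseteq\clb(\clk)$ are in \emph{standard form}, say with cyclic and separating vectors $\xi$ and $\eta$. Then $\xi\ot\eta$ is cyclic for $M\ovt N$, and it is separating because it is cyclic for $M'\ovt N'\subseteq(M\ovt N)'$ (here the easy inclusion above is used). Writing $S_M=J_M\Delta_M^{1/2}$ and $S_N=J_N\Delta_N^{1/2}$ for the closed Tomita operators of $(M,\xi)$ and $(N,\eta)$ with their polar decompositions, the crux is to check that the Tomita operator of $(M\ovt N,\xi\ot\eta)$ is exactly the closure of $S_M\ot S_N$; uniqueness of the polar decomposition then forces $J_{M\ovt N}=J_M\ot J_N$, and Tomita's theorem yields
\[
(M\ovt N)'=(J_M\ot J_N)(M\ovt N)(J_M\ot J_N)=(J_M M J_M)\ovt(J_N N J_N)=M'\ovt N',
\]
the last step again being Tomita's theorem applied to $M$ and $N$ separately.

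To remove the standing assumption I would invoke the structure theory: every von-Neumann algebra is spatially isomorphic to an algebra induced, by a projection in the commutant, from one in standard form (one may first reduce to the $\sigma$-finite case by a central decomposition or by amplifying with $\clb(\ell^2)$, so that faithful normal states, hence standard forms, are available). Writing $M$ and $N$ in this way and using that reduction by a projection commutes with $\ovt$ and with taking commutants (the reduction formula $(M^{p'})'=M'_{p'}$ for $p'\in M'$), the standard-form case computed above propagates to the general case.

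The hard part will be the standard-form core: verifying that the Tomita operator of the tensor product is precisely $\overline{S_M\ot S_N}$ requires genuine care with the domains of the unbounded operators involved, and this modular computation, together with the structural reduction to standard form, carries essentially all the content. For the present paper it of course suffices to cite Theorem~5.9 of Chapter~IV of \cite{takesaki}.
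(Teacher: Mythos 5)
The paper does not prove this statement at all: it is quoted verbatim from Takesaki (Theorem 5.9, Chapter IV of \cite{takesaki}) and used as a black box, so there is no ``paper proof'' to compare against. Your outline is the standard modular-theoretic route to the commutation theorem, and it is correct as a strategy: the easy inclusion $M'\ovt N'\subseteq (M\ovt N)'$, the observation that $\xi\ot\eta$ is cyclic and separating in the standard-form case, the identification $S_{M\ovt N}=\overline{S_M\ot S_N}$ forcing $J_{M\ovt N}=J_M\ot J_N$, and then Tomita's theorem applied three times. Two caveats. First, as you acknowledge, the entire content of the theorem sits in the two steps you leave open, namely the core/domain argument showing that the algebraic tensor product of the two Tomita algebras is a core for $S_{M\ovt N}$, and the structural reduction to standard form; so what you have is a proof plan rather than a proof. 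Second, the parenthetical reduction to the $\sigma$-finite case is not right as stated: amplifying by $\clb(\ell^2)$ does not produce a $\sigma$-finite algebra ($M\ovt\clb(\ell^2)$ is $\sigma$-finite if and only if $M$ is), and a central decomposition need not help either, since a single factor can fail to be $\sigma$-finite. The clean general argument replaces faithful normal states by faithful normal semifinite weights (equivalently, left Hilbert algebras), for which the full Tomita--Takesaki machinery and hence a standard form is always available; the induction/reduction formula $(M_{p'})'=(M')_{p'}$ and the elementary type I commutation $(M\ovt\Comp)'=M'\ovt\clb(\clk)$ then transport the standard-form case to an arbitrary spatial realization, exactly as you indicate. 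For the purposes of this paper the citation suffices, and your sketch is a faithful summary of how the cited result is actually proved.
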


\section{Standard Hilbert modules}\label{sec:2}

In this section we introduce the notion of a standard reproducing
kernel Hilbert module and establish some basic properties. A
characterization of this class is also obtained which we use
throughout this note.

\begin{defn}\label{clasRKHM}
A reproducing kernel Hilbert module $\Hil \subseteq \clo(\D,
\mathbb{C})$ over $\mathbb{C}[z]$ is said to be standard Hilbert
module over $\mathbb{C}[z]$ if there does not exist two non-zero
quotient modules of $\Hil$ which are orthogonal to each other.
\end{defn}

\noindent It follows immediately that a standard Hilbert module
$\Hil$ over $\mathbb{C}[z]$ is always \textit{irreducible}, that is,
the module multiplication operator $M_z$ does not have any
non-trivial reducing subspace.

One of the pleasant features of working with a standard Hilbert
module over $\mathbb{C}[z]$ is that the quotient modules of this
space have the following useful characterization.

\begin{propn}\label{propn1}
 Let $\Hil$ be a reproducing kernel Hilbert module over $\mathbb{C}[z]$. Then
$\clh$ is a standard Hilbert module over $\mathbb{C}[z]$ if and only
if for any non-zero quotient module $\q$ of $\Hil$, the smallest
submodule containing $\q$ is $\Hil$, that is,
\[\mathop{\vee}_{l=0}^{\infty} z^l\q = \Hil.\]
\end{propn}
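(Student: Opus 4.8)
The plan is to prove both implications by contraposition, translating the defining property of a standard Hilbert module (no pair of orthogonal nonzero quotient modules) into the statement about the submodule generated by a quotient module. Throughout I will use the basic duality: $\q$ is a quotient module iff $\q^\perp$ is a submodule, and conversely; and the elementary fact that for a closed subspace $\q$, the smallest submodule containing $\q$ is $\clp := \bigvee_{l\ge 0} z^l \q$.

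First I would handle the direction ``standard $\Rightarrow$ $\bigvee_l z^l\q = \clh$'' by contraposition. Suppose there is a nonzero quotient module $\q$ with $\clp := \bigvee_{l\ge 0}z^l\q \subsetneq \clh$. The key observation is that $\clp$ is by construction a submodule (it is $M_z$-invariant and closed), hence $\clp^\perp$ is a nonzero quotient module. I claim $\clp^\perp \perp \q$: indeed $\q \subseteq \clp$, so $\clp^\perp \subseteq \q^\perp$, and in particular every vector of $\clp^\perp$ is orthogonal to every vector of $\q$. Thus $\q$ and $\clp^\perp$ are two nonzero quotient modules orthogonal to each other, contradicting that $\clh$ is standard. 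Hence $\clp = \clh$.

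For the converse, ``($\bigvee_l z^l\q = \clh$ for every nonzero quotient module $\q$) $\Rightarrow$ standard'', again argue by contraposition. Suppose $\clh$ is not standard, so there exist nonzero quotient modules $\q_1, \q_2$ with $\q_1 \perp \q_2$. Then $\q_1 \subseteq \q_2^\perp$, and since $\q_2^\perp$ is a submodule it is $M_z$-invariant and closed, so it contains the smallest submodule generated by $\q_1$, i.e. $\bigvee_{l\ge 0}z^l\q_1 \subseteq \q_2^\perp \subsetneq \clh$ (proper because $\q_2 \neq 0$). This contradicts the hypothesis applied to the nonzero quotient module $\q_1$. Hence $\clh$ is standard.

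I do not anticipate a serious obstacle here; the proof is essentially a bookkeeping argument with the submodule/quotient-module duality. The one point that deserves care is verifying that $\bigvee_{l\ge 0}z^l\q$ is genuinely the \emph{smallest} submodule containing $\q$ and that it is $M_z$-invariant and closed — this is immediate since it is the closed linear span of $\{z^l q : l\ge 0,\ q\in\q\}$, which is manifestly $M_z$-invariant, and any submodule containing $\q$ must contain all $z^l\q$ and their closed span. A second minor point is to note that $\clp^\perp$ (resp. $\q_2^\perp$) being a quotient module is exactly the assertion that its orthogonal complement $\clp$ (resp. $\q_2$) is a submodule, which holds by construction; so no further work is needed there.
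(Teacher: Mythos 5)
Your proof is correct and follows essentially the same route as the paper: both directions hinge on the observation that $\bigvee_{l\ge 0} z^l\q$ is the smallest submodule containing $\q$, with the forward implication obtained by pairing $\q$ against $(\bigvee_l z^l\q)^{\perp}$ and the converse by noting that a quotient module orthogonal to $\q_1$ forces $\bigvee_l z^l\q_1$ to be proper. The only cosmetic difference is that the paper verifies $\bigvee_l z^l\q_1\perp\q_2$ by the direct computation $\langle z^l f_1,f_2\rangle=\langle f_1,M_z^{*l}f_2\rangle=0$, whereas you phrase the same fact as $\q_1\subseteq\q_2^{\perp}$ with $\q_2^{\perp}$ a submodule.
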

\begin{proof}
Let $\clh$ be a standard Hilbert module over $\mathbb{C}[z]$. Let
$\q$ be a quotient module of $\clh$ such that
\[\tilde{\q}:= \mathop{\vee} _{l=0}^{\infty} z^l\q \neq \Hil.\]
It follows that the quotient module $\tilde{\q}^{\bot}$ is
non-trivial and $\q\bot \tilde{\q}^{\bot}$. This contradicts the
assumption that $\clh$ is a standard Hilbert module.

\noindent We now turn our attention to the converse part. Let $\q_1$
and $\q_2$ be two non-zero quotient modules of $\Hil$, and
$\q_1\perp\q_2$. For all $f_1\in \q_1$ and $f_2\in \q_2$ and $l \in
\mathbb{N}$,
\[\left\langle z^lf_1,f_2\right\rangle = \left\langle M_z^lf_1,
f_2\right\rangle = \left\langle f_1, M_z^{*l}f_2\right\rangle
=0.\]This shows that \[\mathop{\vee}_{l=0}^{\infty} z^l \q_1 \bot
\q_2.\] On the other hand, $\vee_{l=0}^{\infty} z^l \q_1 = \Hil$
implies that $\q_2 = \{0\}$. This is a contradiction. Therefore,
$\clq_1$ is not orthogonal to $\clq_2$ as desired.
 \end{proof}

Our next result shows that if $K^{-1} : \mathbb{D} \times \mathbb{D}
\raro \mathbb{C}$ is a polynomial in $z$ and $\bar{w}$, then
$\clh_K$ can be realized as a standard Hilbert module over
$\mathbb{C}[z]$.

\begin{thm}\label{source}
Let $\clh_K$ be a reproducing kernel Hilbert module over
$\mathbb{C}[z]$ with reproducing kernel $K : \D\times \D\to \Comp$
such that $K^{-1}(z,w)=\sum\limits_{i,j=0}^k a_{ij} z^i\bar{w}^j$ is a
polynomial in $z$ and $\bar{w}$. Then $\Hil_K$ is a standard
Hilbert module over $\mathbb{C}[z]$.
\end{thm}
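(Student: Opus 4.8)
The plan is to use Proposition~\ref{propn1}: it suffices to show that for every non-zero quotient module $\q$ of $\clh_K$, the smallest submodule containing $\q$ is all of $\clh_K$, equivalently $\bigvee_{l=0}^\infty z^l \q = \clh_K$. Equivalently, writing $\s = (\bigvee_l z^l\q)^\perp$, we must show that a submodule $\s$ with $\s^\perp \perp \q$ for some non-zero quotient module $\q$ forces $\s = \{0\}$; it is cleaner to argue directly that no non-zero submodule $\s$ of $\clh_K$ can have a non-zero quotient module of $\clh_K$ orthogonal to it. The key algebraic input is the hypothesis that $K^{-1}(z,w) = \sum_{i,j=0}^k a_{ij} z^i \bar w^j$ is a polynomial, which I will convert into a finite-order recurrence / annihilation relation among the reproducing kernel's power-series coefficients, and hence into a relation involving $M_z$ and $M_z^*$ on $\clh_K$.

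First I would set up the coefficient bookkeeping. Write $K(z,w) = \sum_{p,q\ge 0} b_{pq} z^p \bar w^q$, and let $\{e_p\}$ be the monomial basis-ish family; since $\clh_K$ is a reproducing kernel module over $\mathbb{C}[z]$ containing $1$, the monomials $z^p$ lie in $\clh_K$ and $\langle z^p, z^q\rangle$ is the $(p,q)$ entry of the Gram matrix $G$ which is the "inverse" of the coefficient matrix of $K^{-1}$ in a suitable sense; the relation $K \cdot K^{-1} = 1$ translates into the statement that $G$ (acting on sequences) and the banded matrix $A = (a_{ij})$ padded by zeros are mutually inverse. The upshot I want to extract is a bounded operator identity: there is a polynomial $p(z,\bar z)$, namely the symbol $\sum a_{ij} z^i \bar z^j$ evaluated at $M_z$ (with $z^i \mapsto M_z^i$, $\bar z^j \mapsto M_z^{*j}$ in the appropriate order), such that $p(M_z, M_z^*)$ is, up to a finite-rank correction, the identity — or more precisely, that $M_z^{*k} \big(\sum_{i,j} a_{ij} M_z^i M_z^{*j}\big) M_z^k$ reproduces the kernel vectors correctly. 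Concretely, from $M_z^* K(\cdot,w) = \bar w K(\cdot, w)$ one gets $q(M_z, M_z^*) K(\cdot, w) = K^{-1}(M_z \text{-side})\cdots$; I would instead exploit that $K^{-1}$ being a polynomial means the map $w \mapsto K(\cdot,w)$ satisfies a linear difference equation, so the span of kernels, localized, already contains all monomials up to a controlled degree.

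The cleanest route is probably this: let $\s$ be a non-zero submodule and suppose $\q$ is a non-zero quotient module with $\q \perp \s^{\perp}$, i.e. $\q \subseteq \s$. Pick $0 \neq f \in \q$ and consider that $\q$ is $M_z^*$-invariant, so $M_z^{*m} f \in \q \subseteq \s$ for all $m$, while also $z^l f \in \s$ for all $l$ since $\s$ is a submodule. Now I would use the polynomial identity for $K^{-1}$ in the form of a bounded operator $T := \sum_{i,j=0}^k a_{ij} M_z^i M_z^{*j}$ (bounded since $M_z$ is bounded on the RKHM), which satisfies $T^* K(\cdot, w)$ or $\langle T z^p, z^q\rangle$-type identities showing that $T$ is "close to" a rank-one-ish or identity-like operator when tested against kernel functions; specifically $\langle Tz^p, z^q \rangle = \delta_{pq}$ plus boundary corrections coming only from the finitely many indices near $0$. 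From $M_z^i M_z^{*j} f \in \s$ for all $i,j$ I conclude $Tf \in \s$ and more generally $T(z^l M_z^{*m} f) \in \s$. The relation $T \approx I$ (modulo a finite-dimensional "corner" spanned by $\{1, z, \ldots, z^{k-1}\}$) then lets me push from the one non-zero vector $f$ up to capturing $1 \in \clh_K$, and then $z^l \cdot 1 = z^l \in \s$ for all $l$, whence $\s = \clh_K$ and so $\q = \{0\}$, the contradiction.

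The main obstacle I anticipate is making precise the slogan "$K^{-1}$ is a polynomial $\Rightarrow$ $\sum a_{ij} M_z^i M_z^{*j}$ equals $I$ modulo a finite-rank operator supported on low-degree monomials." The honest version of this requires examining the Gram matrix relation $A G = I$ where $A$ is the banded (bandwidth $\le k$) coefficient matrix of $K^{-1}$ and $G_{pq} = \langle z^p, z^q\rangle$, reading off that for $p, q \ge k$ the operator $\sum_{i,j} a_{ij} M_z^i M_z^{*j}$ annihilates $z^p$ correctly, and controlling the finitely many exceptional rows/columns by hand. Once that finite-rank reduction is in place, the rest is the soft submodule/quotient-module juggling via Proposition~\ref{propn1}, which is routine.
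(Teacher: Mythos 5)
Your skeleton is pointed in the right direction and partly matches the paper: reduce via Proposition~\ref{propn1} to showing $\bigvee_{l\ge 0} z^l\q = \clh_K$ for every non-zero quotient module $\q$, and exploit the polynomial hypothesis through the operator $T=\sum_{i,j=0}^k a_{ij}M_z^iM_z^{*j}$, using that $M_z^{*j}$ preserves $\q$ while $M_z^i$ maps it into the generated submodule, so $T(\q)\subseteq\bigvee_l z^l\q$. But the identity on which your whole argument rests is wrong. You claim $T$ is the identity up to a finite-rank correction supported on low-degree monomials, with $\langle Tz^p,z^q\rangle=\delta_{pq}$ away from finitely many indices, to be read off from a matrix relation between the banded coefficient matrix of $K^{-1}$ and the Gram matrix of monomials. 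In fact $T$ is itself a rank-one operator onto the constants: since $M_z^{*}K(\cdot,w)=\bar w K(\cdot,w)$, one has $\big(TK(\cdot,w)\big)(z)=\sum_{i,j}a_{ij}z^i\bar w^j\,K(z,w)=K^{-1}(z,w)K(z,w)=1$ for every $w$, and since the kernel functions are total, $T$ coincides (up to normalization of $\|1\|$) with $P_{\Comp}$; this is exactly the paper's crux. Already for the Hardy kernel $K(z,w)=(1-z\bar w)^{-1}$ one gets $T=I-M_zM_z^*=P_{\Comp}$, so $\langle Tz^p,z^p\rangle=0$ for all $p\ge 1$ --- the opposite of ``identity mod finite rank.'' The bookkeeping behind your claim is also mis-set-up: $K\cdot K^{-1}=1$ is a two-dimensional convolution identity on the coefficient arrays, not the statement that $(a_{ij})$ is the matrix inverse of the Gram matrix $\big(\langle z^p,z^q\rangle\big)$; that reading is tenable only for diagonal kernels, and even there it produces $T=P_{\Comp}$, not $T\approx I$.

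Because the identity is wrong, the decisive step fails: from $Tf\in\s$ with $T=I+F$, $F$ finite rank, you only learn $Ff\in\s$, and $Ff$ may vanish or miss the constants entirely, so there is no mechanism to ``push up to capturing $1$.'' With the correct identity the proof closes as in the paper: either $P_{\Comp}(\q)\ne\{0\}$, in which case $1\in\bigvee_l z^l\q$ and this submodule, containing all polynomials, is all of $\clh_K$; or $P_{\Comp}(\q)=\{0\}$, in which case $1\in\q^{\perp}$ and, using $M_z^*$-invariance of $\q$, $\q\perp z^l$ for all $l$, forcing $\q=\{0\}$. Finally, the end of your third paragraph is mis-stated: for a general submodule $\s$ containing $\q$, the conclusion $\s=\clh_K$ is no contradiction and does not give $\q=\{0\}$; it is the desired conclusion only when applied to $\s=\bigvee_l z^l\q$, which is what Proposition~\ref{propn1} requires.
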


 \begin{proof}
Let $K^{-1}(z,w)=\sum\limits_{i,j=0}^k a_{ij} z^i\bar{w}^j$ and set
 $K^{-1}(M_{z}, M_{z}^{*}):=\sum\limits_{i,j=0}^k a_{ij} M_{z}^i M_{z}^{* j}$.
 For $z, w \in
\mathbb{D}$ we notice that
\[\begin{split}\langle K^{-1}(M_{z}, M_{z}^*) K(\cdot, w), K(\cdot, z)\rangle & = \sum_{i,j=0}^k \langle a_{ij} M_{z}^i
M_{z}^{* j} K(\cdot, w), K(\cdot, z) \rangle\\ & = \sum_{i,j=0}^k
\langle a_{ij} M_{z}^{* j} K(\cdot, w), M_{z}^{*i} K(\cdot, z)
\rangle \\ & = \sum_{i,j=0}^k z^i \bar{w}^j a_{ij} \langle K(\cdot,
w), K(\cdot, z) \rangle \\ & = K^{-1} (z, w) K(z, w) \\& = \langle
P_{\mathbb{C}}  K(\cdot, w), K(\cdot, z)\rangle,
\end{split}\] where $P_{\Comp}$ is the orthogonal projection of $\clh_K$ onto
the subspace of all constant functions. Consequently, it follows
that
\[ K^{-1}(M_{z}, M_{z}^*) = P_{\mathbb{C}}.\]We now assume that $\q$ is a non-zero
quotient module of $\Hil$ and $\wt{\q}=\vee_{l=0}^{\infty}z^l\q$. It
readily follows that \[P_{\Comp}(\q)=K^{-1}(M_{z},
M_{z}^*)(\q)\subseteq \wt{\q}.\] Now if $P_{\Comp}(\q)=\{0\}$, then
$\q^{\perp}$ contains the constant function $1$ and so $\q=\{0\}$
contradicting the fact that $\q\neq \{0\}$.

\noindent On the other hand, if $P_{\Comp}(\q)\neq\{0\}$, then
$1\in\wt{\q}$ and hence $\wt{\q}=\Hil$. The theorem now follows from
Proposition \ref{propn1}.
\end{proof}

\begin{rem}
We remark that the assumptions of the above theorem includes
implicitly the additional hypothesis that one can define a
functional calculus so that $\frac{1}{K}(M_z, M_z^*)$ make sense for
the kernel function $K$. It was pointed out in the paper by Arazy
and Englis \cite{AE} that for many reproducing kernel Hilbert
spaces, one can define such a $\frac{1}{K}$-calculus. In particular,
examples of standard Hilbert modules over $\mathbb{C}[z]$ includes
the weighted Bergman spaces $L^2_{a, \alpha}(\mathbb{D})$, $\alpha >
1$, with kernel functions
\[K_{a, \alpha}(z, w) = \frac{1}{(1 - z \bar{w})^{\alpha}} \quad
\quad (z, w \in \mathbb{D}).\]
\end{rem}

We will make repeated use of the following lemma concerning
commutant of $C_z = P_{\clq} M_z|_{\clq}$ on a quotient module
$\clq$ of a standard Hilbert module $\clh_K$.

\begin{lemma} \label{lemma1} Let $\Hil$
be a standard Hilbert module over $\mathbb{C}[z]$ and $\q$ be a
non-trivial quotient module of $\Hil$. Let $P$ be a non-zero
orthogonal projection in $B(\q)$. Then \[P C_z = C_z P,\]if and only
if $P = I_{\q}$.
\end{lemma}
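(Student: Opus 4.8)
The plan is to exploit Proposition \ref{propn1}: a non-zero quotient module of a standard Hilbert module generates the whole module under the forward shift. One direction is trivial, so the work is in showing that if $P$ commutes with $C_z$ then $P = I_{\q}$. First I would observe that since $P$ is an orthogonal projection commuting with $C_z$, its range $\clr := P\q$ and kernel $\clk := (I-P)\q$ are both closed subspaces of $\q$ invariant under $C_z$, and in fact each is itself a quotient module of $\clh_K$: indeed $\clr^{\perp}$ (taken inside $\clh_K$) equals $\q^{\perp} \oplus \clk$, which is $M_z$-invariant because $\q^\perp$ is $M_z$-invariant and $\clk$ is $C_z$-invariant with $\clk \perp \q^\perp$. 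The same argument shows $\clk$ is a quotient module. Thus $\clr$ and $\clk$ are two quotient modules of $\clh_K$ with $\clr \perp \clk$.

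Now the key step: both $\clr$ and $\clk$ are orthogonal quotient modules of the standard Hilbert module $\clh_K$, so by the very definition of standard Hilbert module (Definition \ref{clasRKHM}), at most one of them is non-zero. If $P \neq 0$ then $\clr \neq \{0\}$, hence $\clk = \{0\}$, i.e. $I - P = 0$ on $\q$, which is exactly $P = I_{\q}$. The converse is immediate since $I_{\q}$ commutes with everything. The main technical point to get right is the verification that $\clr$ and $\clk$ really are quotient submodules of $\clh_K$ (not merely $C_z$-invariant subspaces of $\q$): one must check carefully that $P_{\q}M_z|_{\q}$-invariance of a subspace of $\q$, combined with $M_z$-invariance of $\q^\perp$, upgrades to $M_z^*$-invariance of the subspace as a subspace of $\clh_K$. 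This follows because for $h$ in such a subspace and $g$ in its orthogonal complement within $\clh_K$, decomposing $g = g_1 + g_2$ with $g_1 \in \q^\perp$ and $g_2 \in \q$, one has $\langle M_z^* h, g\rangle = \langle h, M_z g_1\rangle + \langle h, M_z g_2\rangle$; the first term vanishes since $M_z g_1 \in \q^\perp \perp h$, and the second vanishes by the $C_z$-invariance of the subspace.

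I expect the only genuine obstacle to be bookkeeping with the three-fold orthogonal decomposition $\clh_K = \q^\perp \oplus \clr \oplus \clk$ and making sure the invariance claims are stated for the correct ambient space; once that is clean, the conclusion drops out of Definition \ref{clasRKHM} with essentially no further effort.
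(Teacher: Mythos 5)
Your proof is correct and follows essentially the same route as the paper: decompose $\q$ into the range and kernel of $P$, verify both pieces are quotient modules of $\clh_K$ (the paper does this through the projection identities $P_{\cls}M_z^*P_{\cls}=M_z^*P_{\cls}$ and $P_{\q\ominus\cls}M_z^*P_{\q\ominus\cls}=M_z^*P_{\q\ominus\cls}$, you by showing $\q^{\perp}\oplus\clk$ and $\q^{\perp}\oplus\clr$ are $M_z$-invariant), and then invoke the definition of a standard Hilbert module to force one summand to be zero. One minor wording slip in your last paragraph: the term $\langle h, M_zg_2\rangle=\langle h, C_zg_2\rangle$ vanishes because the complementary piece $\q\ominus\cls$ is $C_z$-invariant (equivalently, $\cls$ is $C_z^*$-invariant), not by $C_z$-invariance of $\cls$ itself; this is harmless here since $P$ commutes with both $C_z$ and $C_z^*$.
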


\begin{proof}
Let $\s$ be a non-zero closed subspace of $\q$ such that \[P_{\s}
C_z = C_z P_{\s},\]or equivalently, $P_{\cls} C_z^* = C_z^*
P_{\cls}$. Hence
\[P_{\cls} M^*_z|_{\clq} = M_z^*|_{\clq} P_{\cls} = M_z^* P_{\cls}.\]
By multiplying both sides of \[P_{\cls} M^*_z|_{\clq} = M_z^*
P_{\cls},\]to the right with $P_{\s}$ we get $P_{\s}M_z^*P_{\s} =
M_z^*P_{\s}$. Hence $\s$ is a quotient module of $\clh$.

On the other hand, using $P_{\s}M_z^*P_{\s}=P_{\s}M_z^*P_{\q}$ along
with the fact that $\q$ is a quotient module we have
\begin{align*}
P_{\q \ominus \s}M_z^*P_{\q \ominus \s} &=
P_{\q}M_z^*P_{\q}-P_{\q}M_z^*P_{\s}
-P_{\s}M_z^*P_{\q}+P_{\s}M_z^*P_{\s}\\ &= M_z^*P_{\q} - M_z^*P_{\s}
= M_z^*P_{\q \ominus \s}.
\end{align*}
Thus $\q$ and $\q \ominus \s$ are two orthogonal quotient modules of
$\clh$. This contradicts the fact that $\clh$ is a standard Hilbert
module over $\mathbb{C}[z]$. Consequently, $\q \ominus \s = \{0\}$,
that is, $\clq = \s$. This completes the proof.
\end{proof}

Let $\clq$ be a quotient module of a Hilbert module $\clh$ over
$\mathbb{C}[z]$ and $\cls$ be a non-trivial closed subspace of
$\clq$. Let
\[P_{\cls} C_z = C_z P_{\cls}.\]The above proof shows that both $\s$ and
$\q\ominus\s$ are quotient modules of $\clh$. One can show that the
converse is also true. Hence this is an equivalent condition.

It is of interest to know whether an irreducible reproducing kernel
Hilbert module over $\mathbb{C}[z]$ is necessarily standard Hilbert
module over $\mathbb{C}[z]$. However, this question is not relevant
in the context of the present paper.

\section{Doubly commuting quotient module}\label{sec:3}

In this section we introduce the notion of a standard Hilbert module
in several variables. We present a characterization result for
quotient modules of standard Hilbert modules over $\mathbb{C}[\z]$,
which are doubly commuting as well as satisfy an additional natural
condition. We also obtain a characterization result for doubly
commuting quotient modules of the weighted Bergman modules over
$\mathbb{D}^n$.

We begin by defining the notion of a standard Hilbert module over
$\mathbb{C}[\z]$.
\begin{defn}
A reproducing kernel Hilbert module $\clh \subseteq \clo(\D^n,
\mathbb{C})$ over $\mathbb{C}[\z]$ is said to be a \emph{standard
Hilbert module over $\mathbb{C}[\z]$} if \[\clh = \clh_{1} \otimes
\cdots \otimes \clh_{n},\]for some standard Hilbert modules
$\{\clh_{i}\}_{i=1}^n$ over $\mathbb{C}[z]$.
\end{defn}

Here, as well as in the rest of this paper we specialize to the
class of standard Hilbert modules over $\mathbb{C}[\z]$.

The following illuminating example makes clear the connection
between the tensor product of quotient modules of standard Hilbert
modules over $\mathbb{C}[z]$ and doubly commuting quotient modules
of standard Hilbert modules over $\mathbb{C}[\z]$:

\noindent Let $\clh = \clh_1 \otimes \cdots \otimes \clh_n$ be a
standard Hilbert module over $\mathbb{C}[\z]$,
and let $\q_j \subseteq \clh_j$ be a quotient module for each $j =
1, \ldots, n$. Then
\[
\q=\q_1\ot\cdots\ot\q_n,
 \]is a doubly commuting quotient module of $\Hil_1\ot\cdots\ot\Hil_n$
 with the module multiplication operators
\[
I_{\q_1}\ot \cdots\ot \underbrace{P_{\q_i}M_{z}|_{\q_i}}
\limits_{i\text{-th}}\ot\cdots\ot I_{\q_n}\quad (i=1,\dots,n).
 \]

The purpose of this section is to prove that under a rather natural
condition a doubly commuting quotient module of a
standard Hilbert module over $\mathbb{C}[\z]$ is
always represented in the above form.

The key ingredient in our approach will be the following
propositions concerning reducing subspaces of standard Hilbert modules.

\begin{propn}\label{ppmain1}
Let $\clh = \Hil_1\ot\cdots\ot\Hil_n$ be a standard Hilbert module
over $\mathbb{C}[\z]$. Let $\q$ be a closed subspace of $\clh$ and
let $k \in \{1, \ldots, n\}$. Then $\q$ is $M_{z_i}$-reducing for
$i=k,k+1,\ldots ,n$, if and only if \[\q= \mathcal{E} \ot
\Hil_k\ot\cdots\ot\Hil_n,\]for some closed subspace
$\mathcal{E}\subseteq \Hil_1\ot\cdots\ot\Hil_{k-1}.$
\end{propn}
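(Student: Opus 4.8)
The plan is to prove the nontrivial ($\Rightarrow$) direction by induction on the number of "reducing" coordinates, peeling off the variables one at a time starting from $z_n$ and working down to $z_k$; the ($\Leftarrow$) direction is immediate since each $M_{z_i}$ for $i \ge k$ acts on a tensor factor $\Hil_i$ that is wholly contained in $\q$, so $\q$ and $\q^\perp$ are both $M_{z_i}$-invariant. For the forward direction, it suffices (by an obvious induction) to treat the base case: if $\q$ is $M_{z_n}$-reducing, then $\q = \clf \ot \Hil_n$ for some closed subspace $\clf \subseteq \Hil_1 \ot \cdots \ot \Hil_{n-1}$; applying this successively to $z_n, z_{n-1}, \ldots, z_k$ (each time viewing the ambient module as a tensor product with the appropriate "tail" already split off) yields the claim. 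So the heart of the matter is the following: a closed subspace $\q \subseteq \cln \ot \Hil_n$ that reduces $I_\cln \ot M_z$ must be of the form $\clf \ot \Hil_n$, where $\cln := \Hil_1 \ot \cdots \ot \Hil_{n-1}$ and $\Hil_n$ is a standard Hilbert module over $\mathbb{C}[z]$.

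To establish this, I would exploit the structure of the commutant of $I_\cln \ot M_z$. Let $P = P_\q$ be the orthogonal projection onto $\q$. Since $\q$ reduces $I_\cln \ot M_z$, the projection $P$ commutes with $I_\cln \ot M_z$, and also (trivially, since $I$ commutes with everything) with $I_\cln \ot M_z^*$ is not automatic — but commuting with a normal operator would give that, and $M_z$ need not be normal, so instead I use that $P$ commutes with $I_\cln \ot M_z$ and with its adjoint $I_\cln \ot M_z^*$ precisely because $\q$ is \emph{reducing} (invariant under both the operator and its adjoint). Thus $P$ lies in the commutant of the von-Neumann algebra generated by $\{I_\cln \ot M_z,\ I_\cln \ot M_z^*\} = \mathbb{C}I_\cln \ \ovt\ W^*(M_z)$, where $W^*(M_z) \subseteq \clb(\Hil_n)$ is the von-Neumann algebra generated by $M_z$ on $\Hil_n$. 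By Theorem \ref{commutant}, this commutant is $\clb(\cln) \ \ovt\ W^*(M_z)'$. Now here is where standardness enters: a standard Hilbert module over $\mathbb{C}[z]$ is irreducible, so $M_z$ on $\Hil_n$ has no nontrivial reducing subspace, which means $W^*(M_z)' \cap \{\text{projections}\}$ contains only $0$ and $I$; more is true — $W^*(M_z)'$ is a factor-like piece, but for our purposes I only need that the only projections in $W^*(M_z)'$ are $0$ and $I_{\Hil_n}$.

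The remaining step is to deduce from "$P \in \clb(\cln) \ \ovt\ W^*(M_z)'$ and $W^*(M_z)'$ has no nontrivial projections" that $P = Q \ot I_{\Hil_n}$ for some projection $Q \in \clb(\cln)$, whence $\q = \operatorname{Ran} P = (\operatorname{Ran} Q) \ot \Hil_n =: \clf \ot \Hil_n$. For this I would argue directly with slice maps (or a disintegration): for each pair $\xi, \eta \in \Hil_n$ the "slice" $(\operatorname{id} \ot \omega_{\xi,\eta})(P) \in \clb(\cln)$, and the condition that $P$ be a projection commuting with $I_\cln \ot M_z$ and $I_\cln \ot M_z^*$ forces, via the no-nontrivial-projections property applied fiberwise, that $P$ is constant in the $\Hil_n$-variable, i.e. $P = Q \ot I$. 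The cleanest route may instead be: since $W^*(M_z)'$ contains no nontrivial projections, it is in fact generated by $I$ as a von-Neumann algebra would be too strong, but I expect the precise statement I need is that any projection in $\clb(\cln)\ \ovt\ W^*(M_z)'$ whose "support" in the second variable must be a projection in $W^*(M_z)'$, hence $0$ or $I$, decomposes accordingly. I anticipate that pinning down this last algebraic step — getting from membership in the tensor-product commutant to the explicit splitting $P = Q \ot I$ — is the main obstacle, and it is precisely where the irreducibility (hence standardness) of $\Hil_n$ is used in an essential way; Lemma \ref{lemma1}-type reasoning, or a direct matrix-of-operators computation using that reducing subspaces of $M_z$ on $\Hil_n$ are trivial, should close it.
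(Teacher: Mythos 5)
Your route is essentially the paper's: realize $P_{\q}$ as an element of the commutant of the von Neumann algebra generated by the relevant $M_{z_i}$'s, compute that commutant with Theorem \ref{commutant}, and use standardness (via Lemma \ref{lemma1}) to trivialize the second tensor factor. The paper does this in one shot for $M_{z_k},\dots,M_{z_n}$ simultaneously rather than peeling off one variable at a time, but your induction is a harmless repackaging. The one place you genuinely stall --- passing from $P\in\clb(\cln)\,\ovt\,W^*(M_z)'$ to $P=Q\ot I_{\Hil_n}$ --- is not actually an obstacle, and the slice-map/fiberwise machinery you sketch is unnecessary. The statement you dismiss as ``too strong'' is exactly the right one: $W^*(M_z)'$ is itself a von Neumann algebra, hence is the norm-closed linear span of its projections (apply the spectral theorem to its self-adjoint elements); since Lemma \ref{lemma1} (applied with $\q=\Hil_n$) says its only projections are $0$ and $I_{\Hil_n}$, it equals $\mathbb{C}I_{\Hil_n}$. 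Consequently $\clb(\cln)\,\ovt\,W^*(M_z)'=\clb(\cln)\ot\mathbb{C}I_{\Hil_n}=\{T\ot I_{\Hil_n}:T\in\clb(\cln)\}$, and a projection of the form $T\ot I_{\Hil_n}$ forces $T$ to be a projection, giving $\q=\clf\ot\Hil_n$ with $\clf=\mbox{ran}\,T$. This is precisely the step the paper records as $\mathcal{N}_i'=\mathbb{C}I_{\Hil_i}$ before invoking Theorem \ref{commutant}. With that one line inserted your argument closes; for the inductive step you should also record the routine observation that $P_{\clf}\ot I_{\Hil_n}$ commutes with $A\ot I_{\Hil_n}$ if and only if $P_{\clf}$ commutes with $A$, so that $\clf$ inherits the reducing property for the remaining coordinates $z_k,\dots,z_{n-1}$.
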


\begin{proof}
For $k\le i\le n$, let $\mathcal{N}_i$ be the von-Neumann algebra
generated by $\{I_{\Hil_i},M_{z}\}$, where $M_z$ is the module
multiplication operator on $\Hil_i$. It follows immediately that the
von-Neumann algebra generated by \[\{I_{\clh}, M_{z_i}:
i=k,k+1,\ldots ,n\} \subseteq \clb(\clh_1 \otimes \cdots \otimes
\clh_n),\]is given by
$$\mathbb{C}I_{\Hil_1\ot\cdots\ot\Hil_{k-1}}\ovt\mathcal{N}_k\ovt\cdots\ovt
\mathcal{N}_n.$$ By virtue of Lemma \ref{lemma1} we
have\[\mathcal{N}^{\prime}_i=\Comp I_{\Hil_i}. \quad \quad (k\le
i\le n)\] On account of Theorem \ref{commutant} we have then
\[\left(\mathbb{C}I_{\Hil_1\ot\cdots\ot\Hil_{k-1}}\ovt\mathcal{N}_k\ovt\cdots\ovt
\mathcal{N}_n\right)^{\prime} =
\clb(\Hil_1\ot\cdots\ot\Hil_{k-1})\ovt
\mathbb{C}I_{\Hil_k\ot\cdots\ot\Hil_n},\]and hence $\q$ is
$M_{z_i}$-reducing subspace for all $i=k,k+1,\ldots ,n$, if and only
if \[P_{\q} \in \left(\mathbb{C}I_{\Hil_1\ot\cdots\ot\Hil_{k-1}}\ovt
\mathcal{N}_k\ovt\cdots\ovt \mathcal{N}_n\right)^{\prime} =
\clb(\Hil_1\ot\cdots\ot\Hil_{k-1})\ovt
\mathbb{C}I_{\Hil_k\ot\cdots\ot\Hil_n}.\] On the other hand, since
$P_{\clq}$ is a projection in
$\clb(\Hil_1\ot\cdots\ot\Hil_{k-1})\ovt
\mathbb{C}I_{\Hil_k\ot\cdots\ot\Hil_n}$, there exists a closed
subspace $\mathcal{E}$ of $\Hil_1\ot\cdots\ot\Hil_{k-1}$ such that
\[P_{\clq} = P_{\mathcal{E}}\ot I_{\Hil_k\ot\cdots\ot\Hil_n}.\]
Hence it follows that \[\q = \mathcal{E}\ot
 \Hil_k\ot\cdots\ot\Hil_n.\]This completes the proof.
 \end{proof}

\begin{propn}\label{ppmain2}
Let $\Hil=\Hil_1\ot\cdots\ot\Hil_n$ be a standard Hilbert module
over $\mathbb{C}[\z]$ and let $\q_1$ be a quotient module of
$\Hil_1.$ Then a closed subspace $\mathcal{M}$ of $\q : = \q_1
\otimes \Hil_2\ot\cdots\ot\Hil_n$ is $P_{\q}M_{z_1}|_{\q}$-reducing
if and only if there exists a closed subspace $\mathcal{E}$ of
$\Hil_2\ot\cdots\ot\Hil_n$ such that \[\mathcal{M} = \q_1 \otimes
\mathcal{E}.\]
\end{propn}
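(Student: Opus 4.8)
The plan is to reduce the statement to a commutant computation, exactly as in the proof of Proposition~\ref{ppmain1}. Put $\clg := \Hil_2\ot\cdots\ot\Hil_n$, so that $\q = \q_1\ot\clg$. Since $M_{z_1} = M_z\ot I_{\clg}$ on $\Hil_1\ot\clg$ and $P_{\q} = P_{\q_1}\ot I_{\clg}$, the module multiplication operator on $\q$ is
\[
P_{\q}M_{z_1}|_{\q} \;=\; \big(P_{\q_1}M_z|_{\q_1}\big)\ot I_{\clg} \;=\; C_z\ot I_{\clg},
\]
where $C_z = P_{\q_1}M_z|_{\q_1}$ is the module multiplication operator on the quotient module $\q_1$ of $\Hil_1$. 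The ``if'' direction is then immediate: if $\mathcal{M} = \q_1\ot\cle$ for a closed subspace $\cle\subseteq\clg$, then $(C_z\ot I_{\clg})\mathcal{M}\subseteq\mathcal{M}$ and $(C_z^*\ot I_{\clg})\mathcal{M}\subseteq\mathcal{M}$, so $\mathcal{M}$ is $(C_z\ot I_{\clg})$-reducing.

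For the converse, let $\clw\subseteq\clb(\q_1)$ be the von-Neumann algebra generated by $\{I_{\q_1},C_z\}$. Then the von-Neumann algebra generated by $\{I_{\q},C_z\ot I_{\clg}\}$ inside $\clb(\q_1\ot\clg)$ is $\clw\ovt\mathbb{C}I_{\clg}$, and a closed subspace $\mathcal{M}\subseteq\q$ is $(C_z\ot I_{\clg})$-reducing if and only if $P_{\mathcal{M}}\in(\clw\ovt\mathbb{C}I_{\clg})'$. By Theorem~\ref{commutant} this commutant equals $\clw'\ovt\clb(\clg)$. Next I would invoke Lemma~\ref{lemma1}: every von-Neumann algebra is the closed linear span of its projections, and any orthogonal projection in $\clw'$ commutes with $C_z$, hence equals $0$ or $I_{\q_1}$ by Lemma~\ref{lemma1}; therefore $\clw' = \mathbb{C}I_{\q_1}$. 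Consequently $P_{\mathcal{M}}\in\mathbb{C}I_{\q_1}\ovt\clb(\clg)$, so $P_{\mathcal{M}} = I_{\q_1}\ot B$ for some $B\in\clb(\clg)$; since $P_{\mathcal{M}}$ is an orthogonal projection, $B = P_{\cle}$ for some closed subspace $\cle\subseteq\clg$, and hence $\mathcal{M}$ is the range of $I_{\q_1}\ot P_{\cle}$, i.e. $\mathcal{M} = \q_1\ot\cle$.

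I do not expect a serious obstacle: the argument is a near-verbatim analogue of the proof of Proposition~\ref{ppmain1}, with the module multiplication operators $M_{z_i}$ there replaced by the compression $C_z$. The one substantive ingredient is the identity $\clw' = \mathbb{C}I_{\q_1}$, which is exactly where the standardness of $\Hil_1$ enters, through Lemma~\ref{lemma1}; everything else is routine bookkeeping with von-Neumann tensor products together with Theorem~\ref{commutant}. The only points requiring a little care are the identification $P_{\q}M_{z_1}|_{\q} = C_z\ot I_{\clg}$ and the passage from the notion of a reducing subspace to membership of $P_{\mathcal{M}}$ in the relevant commutant.
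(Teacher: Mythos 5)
Your proposal is correct and is essentially identical to the paper's own proof: both reduce the statement to the identification $P_{\q}M_{z_1}|_{\q} = (P_{\q_1}M_z|_{\q_1})\ot I$, pass to the commutant of the generated von Neumann algebra via Theorem~\ref{commutant}, and use Lemma~\ref{lemma1} to conclude that the commutant in the first factor is $\mathbb{C}I_{\q_1}$. Your explicit remark that a von Neumann algebra is spanned by its projections just fills in a step the paper leaves implicit.
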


\begin{proof} Suppose $\clq_1$ is a quotient module of $\clh_1$.
We observe that \[P_{\q}M_{z_1}|_{\q} =
\left(P_{\q_1}M_z|_{\q_1}\otimes
I_{\Hil_2\ot\cdots\ot\Hil_n}\right).\] We also note that a closed
subspace $\mathcal{M}$ of $\q$ is $P_{\q}M_{z_1}|_{\q}$-reducing if
and only if \[P_{\mathcal{M}}\in \left(\mathcal{N}\ovt
 I_{\Hil_2\ot\cdots\ot\Hil_n}\right)^{\prime},\]
where $\mathcal{N} \subseteq \clb(\q_1)$ is the von-Neumann algebra
generated by $\{I_{\q_1}, P_{\q_1}M_z|_{\q_1}\}$. Now
\[\left(\mathcal{N}\ovt I_{\Hil_2\ot\cdots\ot\Hil_n}\right)^{\prime} =
\mathcal{N}^{\prime} \ovt \clb(\Hil_2\ot\cdots\ot\Hil_n).\]By Lemma
\ref{lemma1} we have $\mathcal{N}'=\Comp I_{\q_1}$ and hence
 \[\left(\mathcal{N}\ovt I_{\Hil_2\ot\cdots\ot\Hil_n}\right)' =
 \mathbb{C}I_{\q_1}~\bar{\otimes} \clb(\Hil_2\ot\cdots\ot\Hil_n).\]Therefore,
 $P_{\mathcal{M}}\in \left(\mathcal{N}\ovt
 I_{\Hil_2\ot\cdots\ot\Hil_n}\right)'$ if and only if \[P_{\mathcal{M}}= I_{\q_1} \otimes
 P_{\mathcal{E}},\]that is, $\mathcal{M}=\q_1\ot
 \mathcal{E}$, for some closed subspace $\mathcal{E}$ of
 $\Hil_2\ot\cdots\ot\Hil_n$.

\noindent The sufficiency part is trivial. This completes the proof.
\end{proof}

Let $\q$ be a quotient module of a standard Hilbert module over
$\mathbb{C}[\z]$. For $1\le k\le n$, let $[\q]_{z_{k},z_{k+1},\ldots
,z_{n}}$ denote the smallest joint
$(M_{z_k},\dots,M_{z_n})$-invariant subspace containing $\q$. That
is,
\begin{equation}\label{generating}
[\q]_{z_{k},z_{k+1},\ldots ,z_{n}}:=
\bigvee\limits_{(l_k,l_{k+1},\dots,l_n)\in \Nat^{(n-k+1)}}
M_{z_k}^{l_k}\cdot M_{z_{k+1}}^{l_{k+1}}\cdot \cdot \cdot
M_{z_{n}}^{l_n}\q.
 \end{equation}

We are now ready to prove the characterization result concerning
tensor product of quotient modules of standard Hilbert modules over
$\mathbb{C}[\z]$.

\begin{thm} \label{main theorem}
Let $\q$ be a quotient module of a standard Hilbert module $\clh =
\Hil_1\ot\cdots\ot\Hil_n$ over $\mathbb{C}[\z]$. Then $$\q =
\q_1\otimes \cdots \otimes \q_n,$$ for some quotient
module $\q_i$ of $\Hil_i$, $i=1,\dots,n$, if and only if\\
\textup{(i)} $\q$ is doubly commuting, and\\
\textup{(ii)} $[\q]_{z_{k},z_{k+1},\ldots ,z_{n}}$ is a joint
$\left(M_{z_{k}},M_{z_{k+1}},\ldots ,
 M_{z_{n}}\right)$-reducing subspace of $\Hil_1\ot\cdots\ot\Hil_n$
 for $k = 1, \ldots, n$.
\end{thm}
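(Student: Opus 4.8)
The plan is to prove the two implications separately: the forward one is a short verification, and the converse is an induction on $n$ whose inductive step carries all the weight.

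\emph{Necessity of (i) and (ii).} Suppose $\q=\q_1\ot\cdots\ot\q_n$ with $\q_i$ a quotient module of $\Hil_i$ for each $i$. If some $\q_i=\{0\}$ then $\q=\{0\}$ and (i), (ii) hold trivially, so assume each $\q_i\ne\{0\}$. One computes $P_{\q}M_{z_i}|_{\q}=I_{\q_1}\ot\cdots\ot(P_{\q_i}M_z|_{\q_i})\ot\cdots\ot I_{\q_n}$, so for $i<j$ the operators $C_{z_i}$ and $C_{z_j}^*$ act on distinct tensor factors and hence commute, which gives (i). Since $M_{z_\ell}^{l_\ell}$ acts only on the $\ell$-th factor and $\bigvee_{l\ge0}z^l\q_\ell=\Hil_\ell$ by Proposition~\ref{propn1} (each $\Hil_\ell$ being a standard Hilbert module over $\Comp[z]$), one obtains $[\q]_{z_k,\dots,z_n}=\q_1\ot\cdots\ot\q_{k-1}\ot\Hil_k\ot\cdots\ot\Hil_n$, which is visibly $(M_{z_k},\dots,M_{z_n})$-reducing; hence (ii).

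\emph{Sufficiency: the first splitting.} Assume (i) and (ii). We induct on $n$, the case $n=1$ being immediate (every quotient module is then of the required form, and (ii) follows from Proposition~\ref{propn1}). If $\q=\{0\}$ we are done, so let $\q\ne\{0\}$. By (ii) with $k=2$ and Proposition~\ref{ppmain1}, $\clf:=[\q]_{z_2,\dots,z_n}=\q_1\ot\Hil_2\ot\cdots\ot\Hil_n$ for some closed subspace $\q_1\subseteq\Hil_1$ with $\q_1\ne\{0\}$. Since $M_{z_1}^*=M_z^*\ot I\ot\cdots\ot I$ commutes with $M_{z_2},\dots,M_{z_n}$ and $M_{z_1}^*\q\subseteq\q$, the subspace $\clf$ is $M_{z_1}^*$-invariant, which forces $M_z^*\q_1\subseteq\q_1$; thus $\q_1$ is a quotient module of $\Hil_1$. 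One checks moreover that $\clf$ is a quotient module of $\Hil$ (its orthocomplement $(\Hil_1\ominus\q_1)\ot\Hil_2\ot\cdots\ot\Hil_n$ being a submodule) and that $\clf\ominus\q$ is a submodule of $\clf$ for all of $M_{z_1},\dots,M_{z_n}$.

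\emph{Sufficiency: the main step.} By Proposition~\ref{ppmain2} it suffices to show that $\q$ is reducing for $A:=P_{\clf}M_{z_1}|_{\clf}=(P_{\q_1}M_z|_{\q_1})\ot I_{\Hil_2\ot\cdots\ot\Hil_n}$ acting on $\clf$, for then $\q=\q_1\ot\q'$ with $\q'\subseteq\Hil_2\ot\cdots\ot\Hil_n$ closed. Now $A^*=M_{z_1}^*|_{\clf}$ and $\q$ is $M_{z_1}^*$-invariant, so $\q$ is automatically $A^*$-invariant, and the whole problem collapses to showing that the defect $g_f:=Af-C_{z_1}f=P_{\clf\ominus\q}Af$ vanishes for every $f\in\q$. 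This is where (i) enters: using that $C_{z_i}^*=M_{z_i}^*|_{\q}$ (as $\q$ is a quotient module) and the relations $C_{z_1}C_{z_i}^*=C_{z_i}^*C_{z_1}$, a short manipulation yields the identities $M_{z_i}^*g_f=g_{M_{z_i}^*f}$, and more generally $M_{z_i}^*g^{(m)}_f=g^{(m)}_{M_{z_i}^*f}$ for the higher defects $g^{(m)}_f:=A^mf-C_{z_1}^mf=P_{\clf\ominus\q}A^mf$, for all $i\ge2$, $m\ge1$ and $f\in\q$. Consequently the closed span $\clw$ of $\{g^{(m)}_f:m\ge1,\ f\in\q\}$ lies in $\clf\ominus\q$ and is invariant under $A$ and under $M_{z_i}^*|_{\clf}$ for $i\ge2$. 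The crux is to upgrade this to: $\clw$ is $(M_{z_2},\dots,M_{z_n})$-reducing inside $\clf$. Granting that, the irreducibility of $(M_{z_2},\dots,M_{z_n})$ on the standard module $\Hil_2\ot\cdots\ot\Hil_n$ (Proposition~\ref{ppmain1}, via Lemma~\ref{lemma1}) forces $\clw=\clw_1\ot\Hil_2\ot\cdots\ot\Hil_n$ for some $\clw_1\subseteq\q_1$; but $\clw\perp\q$ while $[\q]_{z_2,\dots,z_n}=\q_1\ot\Hil_2\ot\cdots\ot\Hil_n$, so $\clw_1=\{0\}$, hence $\clw=\{0\}$ and every $g_f=0$. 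I expect this upgrade — extracting the full force of the doubly commuting relations together with the standard-module structure (and, if needed, the remaining instances of (ii)) — to be the main obstacle; the underlying difficulty is precisely that the $M_{z_i}$ are not isometries, so the Wold-type decomposition argument available for $H^2(\D^n)$ is not.

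\emph{Sufficiency: closing the induction.} Granted $\q=\q_1\ot\q'$, it remains to check that $\q'$ is a quotient module of $\Hil_2\ot\cdots\ot\Hil_n$ satisfying (i) and (ii) in $n-1$ variables. For $i\ge2$ the compression $C_{z_i}$ has the form $I_{\q_1}\ot C'_{z_i}$ with $C'_{z_i}$ the corresponding compression on $\q'$, so the doubly commuting relations descend to $\q'$; and since $[\q]_{z_k,\dots,z_n}=\q_1\ot[\q']_{z_k,\dots,z_n}$ for $k\ge2$ while $\q_1\ne\{0\}$, the instances $k=3,\dots,n$ of (ii) for $\q$ give (ii) for $\q'$. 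The inductive hypothesis then yields quotient modules $\q_2,\dots,\q_n$ with $\q'=\q_2\ot\cdots\ot\q_n$, and therefore $\q=\q_1\ot\q_2\ot\cdots\ot\q_n$, completing the proof.
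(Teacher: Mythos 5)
Your necessity argument and the overall architecture of the sufficiency direction (the identification $\clf:=[\q]_{z_2,\dots,z_n}=\q_1\ot\Hil_2\ot\cdots\ot\Hil_n$ via Proposition~\ref{ppmain1}, the observation that $\clf$ is $M_{z_1}^*$-invariant so that $\q_1$ is a quotient module, the appeal to Proposition~\ref{ppmain2} once $\q$ is known to reduce $A=P_{\clf}M_{z_1}|_{\clf}$, and the induction on the number of tensor factors) coincide with the paper's proof. However, the one step that carries all the content --- that $\q$ actually reduces $A$ --- is exactly the step you leave open: you reduce it to showing that your defect space $\clw$ is $(M_{z_2},\dots,M_{z_n})$-reducing inside $\clf$ and then declare this ``upgrade'' to be the main obstacle. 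Nothing in your proposal establishes it (your identities only show $\clw\subseteq\clf\ominus\q$ is invariant under $A$ and under $M_{z_i}^*|_{\clf}$, $i\ge 2$, which is far from reducing), so the sufficiency direction is not proved as written.

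The gap is closed in the paper by a much cheaper device: test the commutation of $P_{\q}$ with $M_{z_1}^*$ not only on $\q$ (which is all that your defects $g_f$, $f\in\q$, record) but on the natural spanning set of $\clf$. Since $\q$ is a quotient module, $C_{z_1}^*=M_{z_1}^*|_{\q}$ and $C_{z_2}^{l_2}\cdots C_{z_n}^{l_n}=P_{\q}M_{z_2}^{l_2}\cdots M_{z_n}^{l_n}|_{\q}$; iterating the doubly commuting hypothesis gives $C_{z_1}^*C_{z_2}^{l_2}\cdots C_{z_n}^{l_n}=C_{z_2}^{l_2}\cdots C_{z_n}^{l_n}C_{z_1}^*$ for all $l_2,\dots,l_n\ge 0$, which, after commuting $M_{z_1}^*$ past $M_{z_2}^{l_2}\cdots M_{z_n}^{l_n}$, reads
\[
M_{z_1}^*P_{\q}\bigl(M_{z_2}^{l_2}\cdots M_{z_n}^{l_n}P_{\q}\bigr)=P_{\q}M_{z_1}^*\bigl(M_{z_2}^{l_2}\cdots M_{z_n}^{l_n}P_{\q}\bigr).
\]
The vectors $M_{z_2}^{l_2}\cdots M_{z_n}^{l_n}f$, $f\in\q$, span a dense subspace of $\clf$, and $M_{z_1}^*\clf\subseteq\clf$, so $P_{\q}$ commutes with $M_{z_1}^*|_{\clf}=A^*$; hence $\q$ reduces $A$ and Proposition~\ref{ppmain2} yields $\q=\q_1\ot\cle_1$ directly. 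No wandering/defect subspace, no Wold-type decomposition, and no further use of (ii) is needed at this point; your higher-defect identities $M_{z_i}^*g^{(m)}_f=g^{(m)}_{M_{z_i}^*f}$ are correct but are weaker consequences of the same computation, and the missing ``upgrade'' is precisely what this dense-spanning argument replaces. (The rest of your closing of the induction is fine, modulo the routine check, also implicit in the paper, that condition (ii) for $\q$ descends to $\cle_1$.)
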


\begin{proof}
Let $\q$ be a doubly commuting quotient module of $\clh$ and
$[\q]_{z_{k},z_{k+1},\ldots ,z_{n}}$ be a joint
$\left(M_{z_{k}},M_{z_{k+1}},\ldots, M_{z_{n}}\right)$-reducing
subspace for $k = 1, \ldots, n$. In particular for $k = 2$,
\[\wt{\q}:=[\q]_{z_{2},z_{3},\ldots ,z_{n}},\] is a joint
$\left(M_{z_{2}},M_{z_{3}},\ldots , M_{z_{n}}\right)$-reducing
subspace of $\Hil_1\ot\cdots\ot\Hil_n$. By virtue of
Proposition~\ref{ppmain1} we have $$\wt{\q} = \q_1 \otimes
\Hil_2\ot\cdots\ot\Hil_n,$$ for some closed subspace $\q_1$ of
$\Hil_1$. Also since $\q$ is a quotient module and $M_{z_i}^*$
commutes with $M_{z_j}$ for $i\neq j$, it follows that $\wt{\q}$ is
a $M_{z_1}^*$-invariant subspace. Hence $\q_1$ is a quotient module
of $\Hil_1$. Now we claim that $\q$ is a
$P_{\wt{\q}}M_{z_1}|_{\wt{\q}}$-reducing subspace of $\wt{\q}$. To
this end, since $\q\subseteq \wt{\q}$, it is enough to show that
$$ P_{\q}M_{z_1}^*|_{\wt{\q}} = M_{z_1}^*|_{\q}.$$
Using the fact that $\q$ is doubly commuting it follows that
\[C_{z_1}^*C_{z_i}^l = C_{z_i}^lC_{z_1}^*,\]for
$l\geq 0$ and $2\leq i\leq n$, and hence
\[C_{z_1}^*C_{z_2}^{l_2}\cdots C_{z_n}^{l_n} = C_{z_2}^{l_2}\cdots
C_{z_n}^{l_n}C_{z_1}^*,\] for $l_2, l_3, \ldots ,l_n\geq 0$.
Therefore
\[ M_{z_1}^*P_{\q}M_{z_2}^{l_2}\cdots M_{z_n}^{l_n}P_{\q} =P_{\q}M_{z_2}^{l_2}\cdots
M_{z_n}^{l_n}M_{z_1}^*P_{\q}. \quad \quad (l_2, l_3, \ldots ,l_n\geq
0)\]This implies that \[M_{z_1}^*P_{\q}(M_{z_2}^{l_2}\cdots
M_{z_n}^{l_n}P_{\q})= P_{\q}M_{z_1}^*(M_{z_2}^{l_2}\cdots
M_{z_n}^{l_n}P_{\q}),\] for $l_2, l_3, \ldots ,l_n\geq 0$. This
proves the claim.

\noindent Now applying Proposition \ref{ppmain2}, we obtain a closed
subspace $\mathcal{E}_1$ of $\Hil_2\ot\cdots\ot\Hil_n$ such that
\[\q = \q_1 \otimes \mathcal{E}_1.\]Finally note that since $\q$ is doubly
commuting, $\mathcal{E}_1$ is also doubly commuting quotient module
of $\Hil_2\ot\cdots\ot\Hil_n$ and it satisfies the condition (ii) in
the statement of this theorem. Repeating the argument above for
$\mathcal{E}_1$, we conclude that \[\mathcal{E}_1=\q_2\ot
\mathcal{E}_2,\]for some quotient module $\q_2$ of $\Hil_2$ and
doubly commuting quotient module $\mathcal{E}_2$ of
$\Hil_3\ot\cdots\ot\Hil_n$. Continuing in this way we obtain
quotient modules $\q_i \subseteq \Hil_i$, for $i=1,\dots,n$, such
that $$\q = \q_1 \otimes \q_2\otimes \cdots \otimes \q_n.$$This
proves the sufficient part.

\noindent To prove the necessary part, let $\q = \q_1 \otimes
\q_2\otimes \cdots \otimes \q_n$ be a quotient module of $\Hil$.
Clearly
$$(I_{\q_1}\otimes I_{\q_2}\otimes \cdots \otimes
\underbrace{P_{\q_i}M_z|_{\q_i}}\limits_{i-\textup{th place}}\otimes
\cdots \otimes I_{\q_n})_{i=1}^n,$$is a doubly commuting tuple, that
is, $\q$ is doubly commuting. Finally, using the fact that $\Hil_i$
is a standard Hilbert module over $\mathbb{C}[z]$
for all $i=1,\dots,n$, we have
\begin{align*} [\q]_{z_k,\ldots,z_n}&=\q_1\ot\cdots\ot \q_{k-1}\ot
[\q_k]_{z}\ot\cdots\ot [\q_n]_{z}\\&=
\q_1\ot\cdots\ot\q_{k-1}\ot\Hil_k\ot\cdots\ot\Hil_n,
\end{align*}for $1\le k\le n$. This and Proposition \ref{ppmain1} proves (ii).
This completes the proof.
\end{proof}

\begin{rem}

Let $\Hil_i$ be a Hilbert module over $\mathbb{C}[z]$ with module
multiplication operator $T_i$, $i = 1, \ldots, n$. Moreover, assume
that $\clh_i$ is a standard Hilbert module over $\mathbb{C}[z]$,
that is, there does not exists a pair of non-zero quotient modules
$\clq_1$ and $\clq_2$ such that $\clq_1 \perp \clq_2$. In this case,
the above theorem still remains true for the Hilbert module
$\Hil=\Hil_1\ot\cdots\ot\Hil_n$ over $\mathbb{C}[\z]$ with module
multiplication operators
\[
\{I_{\Hil_1}\ot\cdots\ot I_{\Hil_{i-1}}\ot T_i\ot I_{\Hil_{i+1}}\ot
\cdots\ot I_{\Hil_n}\}_{i=1}^n.
\]
\end{rem}

Let $\Hil_i$ be a reproducing kernel Hilbert module over
$\mathbb{C}[z]$ with kernel $K_i$ such that $K_i^{-1}$ is a
polynomial for all $i=1,\dots,n$. Then by  Theorem \ref{source} we
know that $\Hil_i$'s are standard Hilbert modules
over $\mathbb{C}[z]$  (see also the remark following Theorem
\ref{source}). Thus $\Hil=\Hil_1\ot\cdots\ot\Hil_n$ is a standard
Hilbert module over $\mathbb{C}[\z]$. This
subclass of standard Hilbert modules over
$\mathbb{C}[\z]$ plays the central role in the rest of this paper.
So we make the following definition to refer this subclass.

\begin{defn}\label{anal-defn}
A standard Hilbert module $\clh = \Hil_{K_1} \ot \cdots \ot
\Hil_{K_n}$ over $\mathbb{C}[\z]$ is said to be \textit{analytic
Hilbert module} if $K_i^{-1}$ is a polynomial in two variables $z$
and $\bar{w}$ for all $i=1,\dots,n$.
\end{defn}

The notion of analytic Hilbert module is closely related to the
$\frac{1}{K}$-calculus introduced by Arazy and Englis \cite{AE}. Our
result is true in the generality of Arazy-Englis. However, to avoid
technical complications we restrict our attention to the analytic
Hilbert modules.
%instead of $\frac{1}{K}$-calculus.

Let $\clh$ be a standard Hilbert module over $\mathbb{C}[\z]$. Then
$\clh$ is an analytic Hilbert module if and only if  $K^{-1}(\z,
\w)$ is a polynomial in $z_1, \ldots, z_n, \bar{w}_1, \ldots,
\bar{w}_n$.

We show now that the condition (ii) in Theorem~\ref{main theorem}
holds for any quotient module of an analytic Hilbert module over
$\mathbb{C}[\z]$. After the proof of the proposition we will give
some examples in order.

\begin{propn}
Let $\q$ be a non-zero quotient module of an analytic module
$\Hil=\Hil_1\ot\cdots\ot\Hil_n$ over $\mathbb{C}[\z]$. Then
$[\q]_{z_k,\dots,z_n}$ is
$(M_{z_k},M_{z_{k+1}},\dots,M_{z_n})$-reducing subspace for $k = 1,
\ldots, n$.
\end{propn}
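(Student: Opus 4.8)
The plan is to reduce the statement to the one–variable structure of analytic Hilbert modules via the $\frac1K$–calculus used in Theorem~\ref{source}, and then to invoke Proposition~\ref{ppmain1}. Recall that $\clh = \Hil_1\ot\cdots\ot\Hil_n$ with each $\Hil_i = \Hil_{K_i}$ and $K_i^{-1}$ a polynomial; as established in the proof of Theorem~\ref{source}, on each factor $\Hil_i$ the operator $K_i^{-1}(M_z,M_z^*)$ equals $P_{\Comp}$, the projection onto the constants. Tensoring, for each $i$ one gets on $\clh$ the identity $K_i^{-1}(M_{z_i},M_{z_i}^*) = I_{\Hil_1}\ot\cdots\ot P_{\Comp}^{(i)}\ot\cdots\ot I_{\Hil_n}$, where $P_{\Comp}^{(i)}$ sits in the $i$-th slot. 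The key point is that this is a \emph{polynomial} in $M_{z_i}$ and $M_{z_i}^*$, hence maps any joint $(M_{z_k},\dots,M_{z_n})$-invariant subspace into itself, and in fact maps $[\q]_{z_k,\dots,z_n}$ into itself.

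First I would fix $k$ and write $\tilde\q := [\q]_{z_k,\dots,z_n}$. By construction $\tilde\q$ is invariant under $M_{z_k},\dots,M_{z_n}$, so to prove it is reducing it suffices to show it is invariant under $M_{z_i}^*$ for $i=k,\dots,n$. The idea is to realize $M_{z_i}^*$ (restricted to the relevant slot) as coming out of the constant-projection identity above. Concretely, fix $i\in\{k,\dots,n\}$. Since $P_{\Comp}^{(i)}$ is the projection onto the subspace where the $i$-th variable is ``killed,'' one has on $\Hil_i$ the operator identity expressing $M_z^*$ (on that factor) in terms of $M_z$, $M_z^*$ through $K_i^{-1}$ — more precisely, I would argue that the von Neumann algebra generated by $\{I,M_z,M_z^*\}$ on $\Hil_i$ already contains $P_{\Comp}$, and then use that $P_{\Comp}\Hil_i = \Comp$ is one-dimensional, so multiplication operators send $[\q_i]$-type spaces onto all of $\Hil_i$ exactly as in Proposition~\ref{propn1}. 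The cleanest route: show first that $P_{\Comp}^{(i)}$ maps $\tilde\q$ into $\tilde\q$ (immediate, since it is a polynomial in $M_{z_i},M_{z_i}^*$ and these, together with the fact that $M_{z_j}$ for $j\ne i$ leave $\tilde\q$ invariant for $j\ge k$ while $M_{z_j}^*$ for $j\notin\{k,\dots,n\}$ commute with all $M_{z_i}$, $i\ge k$ — wait, the polynomial only involves $M_{z_i}$ and $M_{z_i}^*$ with $i\ge k$, so no issue arises). Then observe $P_{\Comp}^{(i)}$ is nonzero on $\tilde\q$: if $P_{\Comp}^{(i)}\tilde\q = \{0\}$ then, projecting the whole space, $\tilde\q$ would lie in the submodule of functions vanishing to order... actually $\tilde\q \perp (\Hil_1\ot\cdots\ot\Comp^{(i)}\ot\cdots\ot\Hil_n)$; since $\q\subseteq\tilde\q$ and $\q\ne\{0\}$ is a quotient module, this forces $\q$ to be orthogonal to a nonzero quotient module, contradicting that $\Hil$ is standard (indeed $\Hil_i$ is standard, so this uses Proposition~\ref{propn1} applied to the $i$-th factor after tensoring).

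From $P_{\Comp}^{(i)}\tilde\q \ne \{0\}$ and $P_{\Comp}^{(i)}\tilde\q\subseteq\tilde\q$ I would then deduce that $\tilde\q$ contains a nonzero element of the form (function depending only on the variables $\ne i$) $\ot$ (constant in $z_i$); applying $M_{z_k},\dots,M_{z_n}$, and in particular $M_{z_i}$, to such an element and using that $\tilde\q$ is already $(M_{z_k},\dots,M_{z_n})$-invariant, one generates — within $\tilde\q$ — the entire $\Hil_i$-slot, i.e.\ $\tilde\q$ splits as $\clg \ot \Hil_i$ in the $i$-th factor for a suitable $\clg$; doing this for each $i=k,\dots,n$ and combining (this is exactly the content of Proposition~\ref{ppmain1}, which characterizes joint $(M_{z_k},\dots,M_{z_n})$-reducing subspaces as those of the form $\cle\ot\Hil_k\ot\cdots\ot\Hil_n$) yields $\tilde\q = \cle\ot\Hil_k\ot\cdots\ot\Hil_n$, hence reducing. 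The main obstacle I anticipate is the bookkeeping in the tensor-product slots: making precise that $K_i^{-1}(M_{z_i},M_{z_i}^*)$, a polynomial in operators on the ambient tensor space, both preserves $[\q]_{z_k,\dots,z_n}$ and is nonzero on it, and extracting from its nonvanishing the $\Hil_i$-splitting without circularity. A convenient way to finesse this is to prove the $k=1$ case ($\tilde\q = [\q]_{z_1,\dots,z_n}$ must be all of $\clh$, arguing via $P_{\Comp}^{(i)}$ for every $i$ and the standardness of each factor, so $\tilde\q\supseteq\Comp\cdot 1 = \Comp$ and then $\tilde\q = \clh$) and then run the general $k$ by applying this to the ``truncated'' module $\Hil_k\ot\cdots\ot\Hil_n$ together with Proposition~\ref{ppmain1}; alternatively, induct on $n-k$ using Propositions~\ref{ppmain1} and \ref{ppmain2} in tandem, which mirrors the structure of the proof of Theorem~\ref{main theorem}.
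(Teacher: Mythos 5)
The guiding idea of your proposal (realize the slot-wise projection onto constants as $K_i^{-1}(M_{z_i},M_{z_i}^*)$ and finish with Proposition~\ref{ppmain1}) is the paper's, but your ``key point'' is incorrect as stated and it is load-bearing: a polynomial in $M_{z_i}$ and $M_{z_i}^*$ does \emph{not} map a subspace that is merely $(M_{z_k},\dots,M_{z_n})$-invariant into itself, because of the $M_{z_i}^*$ terms --- and invariance of $\wt\q=[\q]_{z_k,\dots,z_n}$ under $M_{z_i}^*$ is precisely what the proposition asserts, so your parenthetical justification that $P_{\Comp}^{(i)}\wt\q\subseteq\wt\q$ is ``immediate'' is circular. (Already in one variable, $P_{\Comp}=I-M_zM_z^*$ on $H^2(\D)$ does not preserve the $M_z$-invariant subspace $\theta H^2(\D)$ when $\theta(0)\neq 0$.) The paper sidesteps this by normal-ordering the \emph{product} $\prod_{i=k}^{n}K_i^{-1}(M_{z_i},M_{z_i}^*)=\sum a_{\bm{l},\bm{m}}M_{z}^{\bm{l}}M_{z}^{*\bm{m}}$ and applying it to $\q$ itself, which \emph{is} invariant under every $M_{z_j}^*$ because it is a quotient module; this yields $\big(I_{\Hil_1\ot\cdots\ot\Hil_{k-1}}\ot P_{\Comp}^{\ot(n-k+1)}\big)(\q)\subseteq\wt\q$, and the nonvanishing of this image is then argued as in Theorem~\ref{source} (if it were zero, $\q^{\perp}$ would contain $\Hil_1\ot\cdots\ot\Hil_{k-1}\ot 1\ot\cdots\ot 1$ and, being a submodule, would be all of $\Hil$). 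Your alternative appeal to ``standardness of $\Hil$'' to get nonvanishing also assumes an orthogonality property of quotient modules of the $n$-variable module that the paper never defines or proves.

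The second and more serious gap is the reverse containment. Writing $\big(I\ot P_{\Comp}^{\ot(n-k+1)}\big)(\q)=\q_1\ot\Comp^{\ot(n-k+1)}$, your argument produces only the easy inclusion $\q_1\ot\Hil_k\ot\cdots\ot\Hil_n\subseteq\wt\q$ (a nonzero vector constant in the last slots, pushed around by $M_{z_k},\dots,M_{z_n}$). The jump from ``$\wt\q$ contains such a slice'' to ``$\wt\q$ splits as $\clg\ot\Hil_i$'' is exactly the hard direction $\wt\q\subseteq\q_1\ot\Hil_k\ot\cdots\ot\Hil_n$, and invoking Proposition~\ref{ppmain1} to ``combine'' cannot supply it, since that proposition only translates an already established product form (equivalently, reducibility) into the other. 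The paper's proof contains precisely the missing step: it suffices to show $\q\subseteq\q_1\ot\Hil_k\ot\cdots\ot\Hil_n$, equivalently $\q_1^{\perp}\ot\Hil_k\ot\cdots\ot\Hil_n\subseteq\q^{\perp}$; since $\q^{\perp}$ is a submodule, this reduces to $f\ot1\ot\cdots\ot1\in\q^{\perp}$ for $f\in\q_1^{\perp}$, which follows from $\langle f\ot1\ot\cdots\ot1,g\rangle=\langle f\ot1\ot\cdots\ot1,\big(I\ot P_{\Comp}^{\ot(n-k+1)}\big)g\rangle=0$ for $g\in\q$, using that the projection is self-adjoint and sends $\q$ into $\q_1\ot\Comp^{\ot(n-k+1)}$. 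Without this orthogonality computation (or an equivalent), your proposed proof does not establish $\wt\q=\q_1\ot\Hil_k\ot\cdots\ot\Hil_n$, and hence not the reducing property.
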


\begin{proof}
Let $1\le k\le n$ be fixed. Set \[\prod_{i=k}^n K_i^{-1}(z_i,{w}_i)=
\sum_{\mathbf{l},\mathbf{m}\in \Nat^{(n-k+1)}}
a_{\mathbf{l},\mathbf{m}} z^{\bm{l}}\bar{w}^{\bm{m}},\] where
$z^{\bm{l}}=z_k^{l_k}\cdots z_n^{l_n}$ and $\bar{w}^{\bm{m}} =
\bar{w}_k^{m_k} \cdots \bar{w}_n^{m_n}$ and
$\bm{l}=(l_k,\dots,l_{n})$ and $\bm{m}=(m_k,\dots,m_{n})$ are in
$\Nat^{(n-k+1)}$. Likewise, if $\mathbf{l}=(l_k,\dots,l_{n}) \in
\Nat^{(n-k+1)}$, then define $M_{z}^{\mathbf{l}}=M_{z_k}^{l_k}\cdots
M_{z_{n}}^{l_{n}}$. Notice first that
\begin{equation}
\label{projection} I_{\Hil_1\ot\cdots\ot\Hil_{k-1}}\ot
P_{\Comp}^{\ot (n-k+1)}= \prod\limits_{i=k}^{n}
K_i^{-1}(M_{z_i},M_{z_i}^*)= \sum\limits_{\mathbf{l},
\mathbf{m}\in\Nat^{(n-k+1)}}a_{\mathbf{l},\mathbf{m}}
 M_{z}^{\mathbf{l}}M_{z}^{* \mathbf{m}}.
\end{equation}
In the last equality we used the fact that $M_{z_i} M_{z_j}^* =
M_{z_j}^* M_{z_i}$ for $i\neq j$. This implies
\[\big(I_{\Hil_1\ot\cdots\ot\Hil_{k-1}}\ot P_{\Comp^{\ot
(n-k+1)}}\big)(\q) \subseteq [\q]_{z_k,\dots,z_n}.
\]
By a similar argument as the in the proof of Theorem \ref{source},
we have \[\big(I_{\Hil_1\ot\cdots\ot\Hil_{k-1}}\ot P_{\Comp^{\ot
(n-k+1)}}\big)(\q)\neq \{0\}.\]Setting
\[\big(I_{\Hil_1\ot\cdots\ot\Hil_{k-1}}\ot P_{\Comp^{\ot
(n-k+1)}}\big)(\q) = \q_1\ot \Comp^{\ot (n-k+1)},\] for some closed
subspace $\q_1$ of $\Hil_1\ot\cdots\ot\Hil_{k-1}$, we obtain
\[\q_1\ot \Hil_k\ot\cdots\ot\Hil_n\subseteq [\q]_{z_k,\dots,z_n}.\]To see
$ [\q]_{z_k,\dots,z_n}\subseteq \q_1\ot \Hil_k\ot\cdots\ot\Hil_n$,
it is enough to prove that $\q\subseteq \q_1\ot
\Hil_k\ot\cdots\ot\Hil_n$, or equivalently,\[\q_1^{\perp}\ot
\Hil_k\ot\cdots\ot\Hil_n\subseteq \q^{\perp}.\]Since $\q^{\perp}$
 is a submodule the last containment will follow if we show that
 $f\ot \underbrace{1\ot\cdots\ot 1}\limits_{(n-k+1)-\text{times}}
 \in \q^{\perp}$ for any $f\in \q_1^{\perp}$.
 Now for $f\in \q_1^{\perp}$ and $g\in\q$, we have
\begin{align*}
\langle f\ot 1\ot\cdots\ot 1,g\rangle&= \langle (
I_{\Hil_1\ot\cdots\ot\Hil_{k-1}}\ot P_{\Comp^{\ot (n-k+1)}}) (f\ot
1\ot\cdots\ot 1),g\rangle\\&=\langle f\ot 1\ot\cdots\ot 1,
(I_{\Hil_1\ot\cdots\ot\Hil_{k-1}}\ot P_{\Comp^{\ot
(n-k+1)}})g\rangle\\&=0,
\end{align*}
where the last equality follows from the fact that $
(I_{\Hil_{1}\ot\cdots\ot\Hil_{k-1}}\ot P_{\Comp^{\ot (n-k+1)}})g\in
\q_1\ot \Comp^{\ot (n-k+1)}$. Therefore for any $1\le k\le n$,
\[[\q]_{z_k,\dots,z_n}=\q_1\ot\Hil_k\ot\cdots\ot\Hil_n,\]
for some closed subspace $\q_1$ of $\Hil_1\ot\cdots\ot\Hil_{k-1}$.
The result now follows from Proposition~\ref{ppmain1}.
\end{proof}

Combining above proposition, Theorems \ref{source} and \ref{main
theorem} we have the following result.

\begin{thm}\label{analytic module}
Let $\clq$ be a quotient module of an analytic Hilbert module
$\Hil=\Hil_1\ot\cdots\ot\Hil_n$ over $\mathbb{C}[\z]$. Then the
following conditions are equivalent:

\textup{(i)} $\q$ is doubly commuting.

\textup{(ii)} $\q=\q_1\ot\cdots\ot\q_n$ for some quotient module
$\q_i$ of $\Hil_i$, $i=1,\dots,n$.
\end{thm}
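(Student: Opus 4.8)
The plan is to deduce Theorem~\ref{analytic module} as an immediate consequence of the three results that precede it, so the work lies entirely in checking that the hypotheses line up. First I would recall that in an analytic Hilbert module $\Hil=\Hil_1\ot\cdots\ot\Hil_n$, each factor $\Hil_i=\Hil_{K_i}$ has $K_i^{-1}$ a polynomial in $z$ and $\bar w$; by Theorem~\ref{source} each $\Hil_i$ is therefore a standard Hilbert module over $\mathbb{C}[z]$, and hence $\Hil$ is a standard Hilbert module over $\mathbb{C}[\z]$ in the sense of the definition in Section~\ref{sec:3}. Thus Theorem~\ref{main theorem} applies to any quotient module $\q$ of $\Hil$: it asserts that $\q=\q_1\ot\cdots\ot\q_n$ for one-variable quotient modules $\q_i\subseteq\Hil_i$ if and only if (i) $\q$ is doubly commuting and (ii) $[\q]_{z_k,\ldots,z_n}$ is joint $(M_{z_k},\ldots,M_{z_n})$-reducing for $k=1,\ldots,n$.

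The core step is to observe that the preceding proposition removes condition (ii) entirely in the analytic case: for \emph{every} non-zero quotient module $\q$ of an analytic Hilbert module, $[\q]_{z_k,\ldots,z_n}$ is automatically joint $(M_{z_k},\ldots,M_{z_n})$-reducing for each $k$. (For the zero module the statement is trivial.) Consequently, in Theorem~\ref{main theorem}, condition (ii) is vacuously satisfied for analytic Hilbert modules, and the biconditional collapses to: $\q=\q_1\ot\cdots\ot\q_n$ if and only if $\q$ is doubly commuting. That is exactly the equivalence of (i) and (ii) in Theorem~\ref{analytic module}.

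So the proof is essentially a two-line chain of implications. Going from (ii) to (i): if $\q=\q_1\ot\cdots\ot\q_n$, then the compressions $C_{z_i}$ act as $I_{\q_1}\ot\cdots\ot(P_{\q_i}M_z|_{\q_i})\ot\cdots\ot I_{\q_n}$, and operators supported on distinct tensor legs commute, so in particular $C_{z_i}C_{z_j}^*=C_{z_j}^*C_{z_i}$ for $i\ne j$; hence $\q$ is doubly commuting. (This direction is also the ``necessary part'' already proved inside Theorem~\ref{main theorem}, so it can simply be cited.) Going from (i) to (ii): assuming $\q$ is doubly commuting, the preceding proposition supplies condition (ii) of Theorem~\ref{main theorem} for free, and Theorem~\ref{main theorem} then delivers the tensor decomposition $\q=\q_1\ot\cdots\ot\q_n$.

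I do not expect any genuine obstacle here, since all the analytic work has already been done in Theorem~\ref{source}, Theorem~\ref{main theorem} and the preceding proposition; the only thing to be careful about is the degenerate case $\q=\{0\}$, where the asserted equivalence holds trivially (take each $\q_i=\{0\}$, or note both conditions are vacuous), so that invoking the proposition — which is stated for non-zero $\q$ — causes no gap. If one wanted to be thorough one could also remark, as the text does just before the proposition, that $\Hil$ analytic is equivalent to $K^{-1}(\z,\w)$ being a polynomial in $z_1,\ldots,z_n,\bar w_1,\ldots,\bar w_n$, but this is not needed for the proof.
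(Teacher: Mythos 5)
Your proposal matches the paper's argument exactly: the paper derives Theorem~\ref{analytic module} by combining Theorem~\ref{source} (each $\Hil_i$ is standard), the preceding proposition (condition (ii) of Theorem~\ref{main theorem} is automatic for analytic Hilbert modules), and Theorem~\ref{main theorem} itself. Your additional remark on the degenerate case $\q=\{0\}$ is a harmless and correct refinement that the paper leaves implicit.
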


Now we pass to discuss some examples of analytic Hilbert modules and
applications of Theorem \ref{analytic module}. First consider the
case of the Hardy module $H^2(\D^n)$ over the unit polydisc $\D^n$.
The kernel function of $H^2(\D)$ is given by
\[\mathbb{S}(z,w)=\frac{1}{1-z\bar{w}}. \quad \quad (z , w \in \D)\]
In particular, $\mathbb{S}^{-1}(z, w)$ is a polynomial. On account
of the Hilbert module isomorphism
$$H^2(\D^n)\cong \underbrace{H^2(\D)\ot\cdots\ot H^2(\D)}
\limits_{n\text{-times}},$$ we recover the following result of
\cite{sarkar} (Theorem 3.2) and \cite{III}.

\begin{thm}\label{sarkar}
Let $\q$ be a quotient module of $H^2(\D^n)$. Then $\q$ is doubly
commuting if and only if $\q=\q_1\ot\cdots\ot\q_n$ for some quotient
modules $\q_1,\dots,\q_n$ of $H^2(\D)$.
\end{thm}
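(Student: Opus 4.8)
The plan is to \textbf{deduce Theorem~\ref{sarkar} as a special case of Theorem~\ref{analytic module}}, so the proof is essentially a verification that the hypotheses apply. First I would observe that the one-variable Hardy module $H^2(\D)$ is a reproducing kernel Hilbert module over $\mathbb{C}[z]$ with kernel $\mathbb{S}(z,w) = (1-z\bar w)^{-1}$, and that its inverse kernel $\mathbb{S}^{-1}(z,w) = 1 - z\bar w$ is manifestly a polynomial in $z$ and $\bar w$. By Theorem~\ref{source} this makes $H^2(\D)$ a standard Hilbert module over $\mathbb{C}[z]$.

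Next I would invoke the well-known Hilbert module isomorphism $H^2(\D^n) \cong H^2(\D) \ot \cdots \ot H^2(\D)$ ($n$-fold tensor product), which follows from the factorization of the Szeg\H{o} kernel $\mathbb{S}_n(\z,\w) = \prod_{i=1}^n \mathbb{S}(z_i, w_i)$ together with the identification of tensor products of reproducing kernel Hilbert modules developed in Section~\ref{sec:1}; under this isomorphism the coordinate multiplication operators $M_{z_i}$ on $H^2(\D^n)$ correspond to $I \ot \cdots \ot M_z \ot \cdots \ot I$. Hence $H^2(\D^n)$ is a standard Hilbert module over $\mathbb{C}[\z]$ in the sense of the definition preceding Proposition~\ref{ppmain1}, and since each factor has polynomial inverse kernel, it is in fact an \emph{analytic} Hilbert module in the sense of Definition~\ref{anal-defn}.

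Finally, applying Theorem~\ref{analytic module} to the analytic Hilbert module $\Hil = H^2(\D) \ot \cdots \ot H^2(\D)$, a quotient module $\q$ of $H^2(\D^n)$ is doubly commuting if and only if $\q = \q_1 \ot \cdots \ot \q_n$ for some quotient modules $\q_i$ of $H^2(\D)$. Transporting this statement back through the isomorphism $H^2(\D^n) \cong H^2(\D)^{\ot n}$ gives exactly the assertion of Theorem~\ref{sarkar}. There is essentially no obstacle here: the only point requiring a line of care is checking that the identifications (the kernel factorization, the unitary $U$ of Section~\ref{sec:1}, and the intertwining $M_{z_i} = U^* M_i U$) interact correctly with the notions of quotient module and double commutativity, but all of this is already packaged into Theorems~\ref{source}, \ref{main theorem}, and \ref{analytic module}, so the present proof is a short application rather than a new argument.
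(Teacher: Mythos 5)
Your proposal is correct and follows exactly the route the paper takes: the paper derives Theorem~\ref{sarkar} from Theorem~\ref{analytic module} by noting that $\mathbb{S}^{-1}(z,w)=1-z\bar w$ is a polynomial (so $H^2(\D)$ is standard by Theorem~\ref{source}) and invoking the isomorphism $H^2(\D^n)\cong H^2(\D)\ot\cdots\ot H^2(\D)$ to see that $H^2(\D^n)$ is an analytic Hilbert module. Nothing is missing.
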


Next we consider the case of \textit{weighted Bergman spaces} over
$\mathbb{D}^n$. The weighted Bergman spaces over the unit disc is
denoted by $L^2_{a, \alpha}(\D)$, with $\alpha > -1$, and is defined
by
$$L^2_{a, \alpha}(\D):=\{f\in \mathcal{O}(\D): \int_{\D}\vert f(z)\vert^2\
dA_{\alpha}(z)<\infty\},$$ where $dA_{\alpha}(z)=(a+1)(1-\vert
z\vert^2)^{a} dA(z)$ and $dA$ refers the normalized area measure on
$\D$. The weighted Bergman modules are reproducing kernel Hilbert
modules with kernel functions
\[K_{\alpha}(z,w)=\frac{1}{(1-z\bar{w})^{\alpha+2}}. \quad \quad (z, w \in \D)\]
It is evident that $K_{\alpha}^{-1}$ is a polynomial if $\alpha
\in\Nat$. Let $\bm{\alpha} \in \mathbb{Z}^n$ with $\alpha_i > -1$
for $i = 1, \ldots, n$. The weighted Bergman space $L^2_{a,
\bm{\alpha}}(\D^n)$ over $\D^n$ with weight $\bm{\alpha}$ is a
standard Hilbert module over $\mathbb{C}[\z]$
with kernel function
\[K_{\bm{\alpha}}(\z,{\w}):=\prod_{i=1}^nK_{\alpha_i}(z_i,{w_i})
= \prod_{i=1}^n \frac{1}{(1-z_i \bar{w}_i)^{\alpha_i+2}}. \quad
\quad (\z, \w \in \D^n)\]Thus we have the following theorem.

\begin{thm}\label{main result}
Let $\bm{\alpha}=(\alpha_1,\dots, \alpha_n)\in \mathbb{Z}^n$ with
$\alpha_i > -1$ for $i = 1, \ldots, n$. Then a quotient module $\q$
of $L^2_{a, \bm{\alpha}}(\D^n)$ is doubly commuting if and only if
$\q=\q_1\ot \cdots\ot\q_n$ for some quotient modules $\q_i$ of
$L^2_{a, \alpha_i}(\D)$, $i=1,\dots,n$.
 \end{thm}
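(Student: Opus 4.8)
The plan is to deduce this directly from Theorem \ref{analytic module} once the weighted Bergman module over $\D^n$ is recognized as an analytic Hilbert module in the sense of Definition \ref{anal-defn}. First I would observe that since $\alpha_i \in \mathbb{Z}$ and $\alpha_i > -1$, we have $\alpha_i \geq 0$, so $\alpha_i + 2$ is a positive integer. Consequently the inverse of the one-variable kernel,
\[
K_{\alpha_i}^{-1}(z,w) = (1 - z\bar{w})^{\alpha_i + 2},
\]
is a genuine polynomial in $z$ and $\bar{w}$. By Theorem \ref{source} this shows that each $L^2_{a,\alpha_i}(\D)$ is a standard Hilbert module over $\mathbb{C}[z]$.

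Next I would invoke the identification established in Section \ref{sec:1}: since the kernel of $L^2_{a,\bm{\alpha}}(\D^n)$ factors as $K_{\bm{\alpha}}(\z,\w) = \prod_{i=1}^n K_{\alpha_i}(z_i, w_i)$, the unitary $U$ constructed there gives a Hilbert module isomorphism
\[
L^2_{a,\bm{\alpha}}(\D^n) \cong L^2_{a,\alpha_1}(\D) \otimes \cdots \otimes L^2_{a,\alpha_n}(\D),
\]
intertwining the module multiplication operators $M_{z_i}$ with $I \otimes \cdots \otimes M_z \otimes \cdots \otimes I$. Combining this with the previous paragraph, $L^2_{a,\bm{\alpha}}(\D^n)$ is a tensor product of standard Hilbert modules over $\mathbb{C}[z]$ whose individual kernel inverses are polynomials, hence it is an analytic Hilbert module over $\mathbb{C}[\z]$ by Definition \ref{anal-defn}.

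Finally, applying Theorem \ref{analytic module} to $\clh = L^2_{a,\bm{\alpha}}(\D^n)$ yields that a quotient module $\q$ is doubly commuting if and only if $\q = \q_1 \otimes \cdots \otimes \q_n$ for quotient modules $\q_i$ of $L^2_{a,\alpha_i}(\D)$, which is exactly the claim. There is no substantive obstacle here; the only point requiring a moment of care is the integrality constraint $\alpha_i \in \mathbb{Z}$, which is precisely what guarantees that $K_{\alpha_i}^{-1}$ is a polynomial rather than merely a function admitting a $\tfrac{1}{K}$-calculus in the sense of Arazy--Englis — and it is this polynomiality that lets us quote Theorem \ref{source} verbatim.
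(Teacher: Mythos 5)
Your proposal is correct and follows the paper's own route: the paper likewise observes that $K_{\alpha_i}^{-1}(z,w)=(1-z\bar{w})^{\alpha_i+2}$ is a polynomial when $\alpha_i$ is a non-negative integer, so that each $L^2_{a,\alpha_i}(\D)$ is standard by Theorem \ref{source}, identifies $L^2_{a,\bm{\alpha}}(\D^n)$ with the tensor product $L^2_{a,\alpha_1}(\D)\ot\cdots\ot L^2_{a,\alpha_n}(\D)$ via the kernel factorization, and then applies Theorem \ref{analytic module}. Nothing further is needed.
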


Note that by the remark after Theorem~\ref{source} the above
characterization result also holds for
$\bm{\alpha}=(\alpha_1,\dots,\alpha_n)\in \mathbb{R}^n$ with
$\alpha_i>1$, $i=1,\dots,n$.

\section{Co-doubly commuting submodules}\label{sec:4}

The purpose of this section is twofold. First, we explicitly compute
the cross commutators of a co-doubly commuting submodule (see
Definition \ref{DCD}) of analytic Hilbert modules over
$\mathbb{C}[\z]$. Second, we investigate a variety of issues related
to essential doubly commutativity of co-doubly commuting submodules.
In particular, we completely classify the class of co-doubly
commuting submodules which are essentially doubly commuting for
$n\ge 3$.

We start with a well known result (cf. \cite{sarkar}) concerning sum
of a family of commuting orthogonal projections on Hilbert spaces.

\begin{lemma}\label{P-F} Let $\{P_i\}_{i=1}^n$ be
a collection of commuting orthogonal
projections on a Hilbert space $\clh$. Then $\cll :=
\mathop{\sum}_{i=1}^n \mbox{ran} P_i$ is closed and the orthogonal
projection of $\clh$ onto $\cll$ is given by
\[P_{\cll}
= I_{\clh} - \mathop{\prod}_{i=1}^n (I_{\clh} - P_i).\]
\end{lemma}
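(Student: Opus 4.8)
The statement is a standard fact about a finite family of commuting orthogonal projections, and I would prove it by induction on $n$, reducing everything to the case $n = 2$ and then gluing. The key observation is that commuting orthogonal projections generate a commutative von Neumann algebra, inside which the operations $\vee$ (supremum) and $\wedge$ (infimum) of projections are algebraic: for two commuting projections $P, Q$ one has $P \wedge Q = PQ$ and $P \vee Q = P + Q - PQ = I - (I-P)(I-Q)$. So the real content is (a) that $\sum_{i=1}^n \operatorname{ran} P_i$ is closed, and (b) that the projection onto it is the supremum $P_1 \vee \cdots \vee P_n$, which by the algebraic identity equals $I - \prod_{i=1}^n (I - P_i)$.

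First I would verify the base case $n=2$: if $P, Q$ are commuting orthogonal projections, then $R := P + Q - PQ$ is self-adjoint, and since $P, Q, PQ$ all commute one checks directly that $R^2 = R$ (expand, using $P^2 = P$, $Q^2 = Q$, $(PQ)^2 = PQ$, and commutativity), so $R$ is an orthogonal projection. Then $\operatorname{ran} R \supseteq \operatorname{ran} P$ because $RP = P + QP - PQP = P$ (again using commutativity and idempotency), and similarly $\operatorname{ran} R \supseteq \operatorname{ran} Q$; conversely any $h \in \operatorname{ran} R$ is of the form $Ph + Qh - PQh \in \operatorname{ran} P + \operatorname{ran} Q$. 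Hence $\operatorname{ran} R = \operatorname{ran} P + \operatorname{ran} Q$, which in particular shows this algebraic sum is already closed (it is the range of a projection), and $R = P_{\operatorname{ran} P + \operatorname{ran} Q}$. Finally $R = I - (I-P)(I-Q)$ by direct expansion.

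Next I would run the induction. Assume the result for $n-1$: $\cll' := \sum_{i=1}^{n-1} \operatorname{ran} P_i$ is closed with $P_{\cll'} = I - \prod_{i=1}^{n-1}(I - P_i)$. The crucial point is that $P_{\cll'}$, being a polynomial in the $P_i$ with $i \le n-1$, commutes with $P_n$. Applying the $n=2$ case to the commuting pair $(P_{\cll'}, P_n)$ gives that $\operatorname{ran} P_{\cll'} + \operatorname{ran} P_n = \cll' + \operatorname{ran} P_n = \cll$ is closed with orthogonal projection $I - (I - P_{\cll'})(I - P_n) = I - \prod_{i=1}^{n-1}(I - P_i) \cdot (I - P_n) = I - \prod_{i=1}^n (I - P_i)$, which is exactly the desired formula. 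This completes the induction.

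**Main obstacle.** There is no serious obstacle here; the only points requiring care are (i) making sure the algebraic identities are manipulated using commutativity at each step (without commutativity neither $R^2 = R$ nor $RP = P$ holds), and (ii) in the induction step, explicitly noting that $P_{\cll'}$ is a \emph{polynomial} in $P_1, \dots, P_{n-1}$ and therefore commutes with $P_n$ — this is what lets the two-projection case be reapplied. One could alternatively phrase the whole argument inside the commutative von Neumann algebra generated by $\{P_i\}$, where $I - \prod (I - P_i)$ is visibly the supremum projection, but the bare-hands induction above is shorter and self-contained.
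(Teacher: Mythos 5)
Your proof is correct. The paper itself offers no proof of this lemma: it is stated as a well-known fact with a pointer to \cite{sarkar}, so there is no internal argument to compare against. Your bare-hands induction is a complete, self-contained justification: the $n=2$ computation (that $R=P+Q-PQ$ is an orthogonal projection with $\operatorname{ran}R=\operatorname{ran}P+\operatorname{ran}Q$, whence this algebraic sum is closed) is verified correctly using commutativity, and the induction step hinges exactly where you flag it, namely that $P_{\cll'}=I_{\clh}-\prod_{i=1}^{n-1}(I_{\clh}-P_i)$ is a polynomial in $P_1,\dots,P_{n-1}$ and therefore commutes with $P_n$, so the two-projection case applies and yields $P_{\cll}=I_{\clh}-\prod_{i=1}^{n}(I_{\clh}-P_i)$.
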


Now we are ready to present a characterization of co-doubly
commuting submodules of an analytic Hilbert module $\mathbb{C}[\z]$.
Recall that a submodule $\s$ of an analytic Hilbert module $\Hil$
over $\mathbb{C}[\z]$ is co-doubly commuting if $\q=\s^{\perp}(\cong
\Hil/\s)$ is doubly commuting.

\begin{thm} \label{co-doubly commuting}
Let $\Hil=\Hil_1\ot\cdots\ot\Hil_n$ be an analytic Hilbert module
over $\mathbb{C}[\z]$ and $\s$ be a submodule of $\clh$. Then $\cls$
is co-doubly commuting if and only if
\[\s=(\q_1\ot\cdots\ot\q_n)^{\perp}= \sum_{i=1}^n
\Hil_1\ot\cdots\ot\Hil_{i-1}\ot\q_i^{\perp}\ot\Hil_{i+1}\ot\cdots\ot\Hil_n,\]
for some quotient module $\q_i$ of $\Hil_i$ and $i=1,\dots,n$.
\end{thm}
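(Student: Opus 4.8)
The plan is to reduce Theorem~\ref{co-doubly commuting} to the already-established equivalence in Theorem~\ref{analytic module} by passing to the orthogonal complement. Recall that $\cls$ is co-doubly commuting precisely when $\q:=\cls^{\perp}$ is a doubly commuting quotient module of $\Hil=\Hil_1\ot\cdots\ot\Hil_n$. Since $\Hil$ is an analytic Hilbert module, Theorem~\ref{analytic module} tells us that $\q$ is doubly commuting if and only if $\q=\q_1\ot\cdots\ot\q_n$ for some quotient modules $\q_i\subseteq\Hil_i$. Hence $\cls$ is co-doubly commuting if and only if $\cls=(\q_1\ot\cdots\ot\q_n)^{\perp}$ for such a family $\{\q_i\}_{i=1}^n$. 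So the only thing that actually requires an argument is the \emph{second} equality in the statement, namely the identification
\[
(\q_1\ot\cdots\ot\q_n)^{\perp}=\sum_{i=1}^n \Hil_1\ot\cdots\ot\Hil_{i-1}\ot\q_i^{\perp}\ot\Hil_{i+1}\ot\cdots\ot\Hil_n .
\]

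To prove this identity I would argue as follows. On $\Hil=\Hil_1\ot\cdots\ot\Hil_n$ consider, for each $i$, the orthogonal projection
\[
Q_i:=I_{\Hil_1}\ot\cdots\ot I_{\Hil_{i-1}}\ot P_{\q_i}\ot I_{\Hil_{i+1}}\ot\cdots\ot I_{\Hil_n},
\]
whose range is $\Hil_1\ot\cdots\ot\q_i\ot\cdots\ot\Hil_n$. These $n$ projections commute with one another (they act on disjoint tensor legs), and a standard fact about elementary tensors of projections gives $\prod_{i=1}^n Q_i=P_{\q_1}\ot\cdots\ot P_{\q_n}=P_{\q_1\ot\cdots\ot\q_n}$. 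Now set $P_i:=I_{\Hil}-Q_i$; this is again an orthogonal projection, with range $\Hil_1\ot\cdots\ot\q_i^{\perp}\ot\cdots\ot\Hil_n$, and the $P_i$ still commute pairwise. Apply Lemma~\ref{P-F} to the family $\{P_i\}_{i=1}^n$: the sum $\cll:=\sum_{i=1}^n\operatorname{ran}P_i$ is closed and
\[
P_{\cll}=I_{\Hil}-\prod_{i=1}^n (I_{\Hil}-P_i)=I_{\Hil}-\prod_{i=1}^n Q_i=I_{\Hil}-P_{\q_1\ot\cdots\ot\q_n}=P_{(\q_1\ot\cdots\ot\q_n)^{\perp}}.
\]
Therefore $\cll=(\q_1\ot\cdots\ot\q_n)^{\perp}$, which is exactly the claimed identity since $\operatorname{ran}P_i=\Hil_1\ot\cdots\ot\q_i^{\perp}\ot\cdots\ot\Hil_n$.

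Finally I would note that each subspace $\Hil_1\ot\cdots\ot\q_i^{\perp}\ot\cdots\ot\Hil_n$ is indeed a submodule of $\Hil$: $\q_i^{\perp}$ is a submodule of $\Hil_i$ (being the complement of the quotient module $\q_i$), so it is invariant under $M_z$ on $\Hil_i$, and invariance under each $M_{z_j}$ on $\Hil$ follows leg by leg; a finite sum of submodules is a submodule, which is consistent with the fact that $\cls=(\q_1\ot\cdots\ot\q_n)^{\perp}$ is a submodule. I do not expect any serious obstacle here: the content is entirely carried by Theorem~\ref{analytic module}, and the remaining work is the elementary projection computation above, whose only slightly delicate point is checking that the $P_i$ commute and that $\prod_i Q_i$ equals the projection onto the tensor product — both of which are immediate from the tensor-leg structure. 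The one thing to be careful about is that the sum of the ranges is genuinely closed, and that is precisely what Lemma~\ref{P-F} supplies.
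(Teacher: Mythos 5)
Your proposal is correct and follows essentially the same route as the paper: reduce to Theorem~\ref{analytic module} to get $\cls=(\q_1\ot\cdots\ot\q_n)^{\perp}$, then apply Lemma~\ref{P-F} to the commuting projections onto $\Hil_1\ot\cdots\ot\q_i^{\perp}\ot\cdots\ot\Hil_n$, using $\prod_i(I_{\Hil}-P_i)=P_{\q_1\ot\cdots\ot\q_n}$ to identify the closed sum with the orthogonal complement. The paper's proof is exactly this computation, so there is nothing to add.
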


\begin{proof}
Let $\cls$ be a co-doubly commuting submodule of $\clh$. Applying
Theorem~\ref{analytic module} to $\cls$ we have
\[\s=(\q_1\ot\cdots\ot\q_n)^{\perp},\]
for some quotient module $\q_i$ of $\Hil_i$ and $i=1,\dots,n$. Now
let $P_i$ be the orthogonal projection of $\clh$ onto
$\Hil_1\ot\cdots\ot\Hil_{i-1}\ot\q_i^{\perp}\ot\Hil_{i+1}\ot\cdots\ot\Hil_n$.
Then $\{P_i\}_{i=1}^n$ satisfies the hypothesis of Lemma \ref{P-F}.
Also note that $\clq_1 \otimes \cdots \otimes \clq_n$ is the range
of the orthogonal projection of $\prod_{i=1}^n (I_{\clh} - P_i)$,
that is,\[P_{\clq_1 \otimes \cdots \otimes \clq_n} = \prod_{i=1}^n
(I_{\clh} - P_i).\] From this and Lemma \ref{P-F} we readily obtain
\[\cls = \sum_{i=1}^n
\Hil_1\ot\cdots\ot\Hil_{i-1}\ot\q_i^{\perp}\ot\Hil_{i+1}\ot\cdots\ot\Hil_n.\]This
completes the proof.
\end{proof}

In the sequel we will make use of the following notation.

\noindent Let $\q=\q_1\ot\cdots\ot\q_n$ be a doubly commuting
quotient module of an analytic Hilbert module $\clh = \clh_1 \otimes
\cdots \otimes \clh_n$ over $\mathbb{C}[\z]$ , where $\clq_i$ is a
quotient module of $\clh_i$, $i = 1, \ldots, n$. Let $\bm{\lambda}
=\{\lambda_1,\dots,\lambda_k\}$ be a non-empty subset of
$\{1,\dots,n\}$. The subspace $\q_{\bm{\lambda}}^{\perp}$ of $\clh$
is defined by
\begin{equation}\q_{\bm{\lambda}}^{\perp}:=\q_1\ot\cdots\ot\underbrace{\q_{\lambda_1}^{\perp}}
\limits_{\lambda_1\text{-th}}\ot\cdots\ot
\underbrace{\q_{\lambda_k}^{\perp}}
\limits_{\lambda_k\text{-th}}\ot\cdots\ot\q_n. \label{qperp}
\end{equation}
%In particular, for $\bm{\lambda} =\{k\}$, $1\le k\le n$, we simply write $\q_{[k]}^{\perp}$
%instead of $\q_{[\Lambda]}^{\perp}$ and in this case
%$\q_{[k]}^{\perp}=
%\q_1\ot\cdots\ot\q_{k-1}\ot \q_k^{\perp}\ot\q_{k+1}\ot\cdots\ot\q_n$.
Notice that \[\q_{\bm{\lambda}}^{\perp} \perp
\q_{\bm{\lambda}'}^{\perp},\]for each non-empty $\bm{\lambda},
\bm{\lambda}' \subseteq \{1, \ldots, n\}$ and $\bm{\lambda} \neq
\bm{\lambda}'$. This implies that \[(\q_1\ot\cdots\ot\q_n)^{\perp}=
\bigoplus_{\emptyset\neq\bm{\lambda} \subseteq \{1,\dots,n\}}
\q_{\bm{\lambda}}^{\perp}.\]

The following theorem provides us with an easy way to calculate the
cross commutators of co-doubly commuting submodules of analytic
Hilbert modules over $\mathbb{C}[\z]$.

\begin{thm}
Let $\Hil=\Hil_1\ot\cdots\ot\Hil_n$ be an analytic Hilbert module
over $\mathbb{C}[\z]$ and $\s= (\q_1\ot\cdots\ot\q_n)^{\perp}$ be a
co-doubly commuting submodule of $\clh$. Then for all $1\leq i<j\leq
n,$
\[[R^*_{z_i},R_{z_j}] = P_{\q_1}\ot\cdots\ot
\underbrace{P_{\q_i}M_z^*P_{\q_i^{\perp}}}\limits_{i\textup{-th}}\ot
\cdots \ot \underbrace{P_{\q_j^{\perp}}M_zP_{\q_j}}
\limits_{j\textup{-th}}\ot\cdots \ot P_{\q_n},\]where $R_{z_j}=
M_{z_j}|_{\s} $ for $1\le j\le n$.
 \end{thm}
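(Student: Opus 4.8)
The plan is to reduce the cross-commutator to an operator identity on $\Hil$ and then expand it leg-by-leg. Write $\q := \q_1\ot\cdots\ot\q_n$, so $\s=\q^{\perp}$; by Theorem~\ref{co-doubly commuting} (equivalently Theorem~\ref{analytic module}) each $\q_i$ is a quotient module of $\Hil_i$, $P_{\q}=P_{\q_1}\ot\cdots\ot P_{\q_n}$ and $P_{\s}=I_{\Hil}-P_{\q}$. Two elementary facts do most of the work: first, $R_{z_j}^{*}=P_{\s}M_{z_j}^{*}|_{\s}$; second, $M_{z_i}^{*}M_{z_j}=M_{z_j}M_{z_i}^{*}$ whenever $i\neq j$, since these operators act on different tensor legs. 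Using that $\s$ is $M_{z_j}$-invariant, for $f\in\s$ we get $R_{z_i}^{*}R_{z_j}f=P_{\s}M_{z_i}^{*}M_{z_j}f=P_{\s}M_{z_j}M_{z_i}^{*}f$ and $R_{z_j}R_{z_i}^{*}f=M_{z_j}P_{\s}M_{z_i}^{*}f$, hence
\[
[R_{z_i}^{*},R_{z_j}]=\big(P_{\s}M_{z_j}-M_{z_j}P_{\s}\big)M_{z_i}^{*}\big|_{\s}=[P_{\s},M_{z_j}]\,M_{z_i}^{*}\big|_{\s}.
\]

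Next I would compute the right-hand side explicitly. Since $P_{\s}=I-P_{\q}$ we have $[P_{\s},M_{z_j}]=M_{z_j}P_{\q}-P_{\q}M_{z_j}$, and because $M_{z_j}$ and $P_{\q}$ act as $M_z$ (resp.\ $P_{\q_j}$) on the $j$-th leg and as the identity (resp.\ $P_{\q_k}$) on every other leg, this is the elementary tensor with $j$-th leg $M_z P_{\q_j}-P_{\q_j}M_z$ and $k$-th leg $P_{\q_k}$ for $k\neq j$. Since $\q_j$ is a quotient module, $M_z^{*}\q_j\subseteq\q_j$, i.e.\ $P_{\q_j}M_zP_{\q_j^{\perp}}=0$, so $M_zP_{\q_j}-P_{\q_j}M_z=P_{\q_j^{\perp}}M_zP_{\q_j}$. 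Multiplying on the right by $M_{z_i}^{*}$ only changes the $i$-th leg, from $P_{\q_i}$ to $P_{\q_i}M_z^{*}$. Finally, restricting to $\s$ amounts to multiplying on the right by $P_{\s}=I-P_{\q}$; the $-P_{\q}$ summand leaves every leg unchanged except the $i$-th, which becomes $P_{\q_i}M_z^{*}P_{\q_i}$, so after subtraction the $i$-th leg of $[R_{z_i}^{*},R_{z_j}]$ is $P_{\q_i}M_z^{*}-P_{\q_i}M_z^{*}P_{\q_i}=P_{\q_i}M_z^{*}P_{\q_i^{\perp}}$, yielding exactly the asserted formula. (One should also observe that the operator on the right annihilates $\s^{\perp}=\q$ — it kills $\q_i$ on the $i$-th leg — and has range inside $\q_1\ot\cdots\ot\q_i\ot\cdots\ot\q_j^{\perp}\ot\cdots\ot\q_n\subseteq\s$, so reading it as an operator on $\s$ is unambiguous.)

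I expect the only step requiring genuine care to be the first reduction: one must invoke both that $z_i$-multiplication commutes with $z_j^{*}$-multiplication for $i\neq j$ and that $\s$ is a submodule, which together let the commutator be pulled all the way out to $[P_{\s},M_{z_j}]M_{z_i}^{*}$. Everything afterwards is a mechanical expansion of the tensor legs together with the single quotient-module identity $P_{\q_j}M_zP_{\q_j^{\perp}}=0$.
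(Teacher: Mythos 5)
Your argument is correct, and its first half coincides with the paper's: both use $R_{z_i}^*=P_{\s}M_{z_i}^*|_{\s}$, the commutation $M_{z_i}^*M_{z_j}=M_{z_j}M_{z_i}^*$ for $i\neq j$, and $M_{z_j}\s\subseteq\s$ to reduce the cross commutator to $P_{\s}M_{z_j}P_{\q_1\ot\cdots\ot\q_n}M_{z_i}^*|_{\s}$; your $[P_{\s},M_{z_j}]M_{z_i}^*|_{\s}$ is the same operator, since the submodule property gives $P_{\s}M_{z_j}-M_{z_j}P_{\s}=P_{\s}M_{z_j}P_{\s^{\perp}}$. The two proofs part ways only in how this expression is then evaluated. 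The paper inserts the orthogonal decomposition $P_{\s}=\sum_{\emptyset\neq\bm{\lambda}\subseteq\{1,\dots,n\}}P_{\q_{\bm{\lambda}}^{\perp}}$ from \eqref{qperp} on both sides and kills all cross terms via $P_{\q_1\ot\cdots\ot\q_n}M_{z_l}^*P_{\q_{\bm{\lambda}}^{\perp}}=0$ for $\bm{\lambda}\neq\{l\}$, so that only $P_{\q_{\{j\}}^{\perp}}M_{z_j}P_{\q_1\ot\cdots\ot\q_n}M_{z_i}^*P_{\q_{\{i\}}^{\perp}}$ survives. You never decompose $\s$ at all: you compute $[P_{\s},M_{z_j}]=M_{z_j}P_{\q}-P_{\q}M_{z_j}$ leg by leg using $P_{\q}=P_{\q_1}\ot\cdots\ot P_{\q_n}$ and the one-variable identity $M_zP_{\q_j}-P_{\q_j}M_z=P_{\q_j^{\perp}}M_zP_{\q_j}$ (valid because $\q_j^{\perp}$ is $M_z$-invariant), then right-multiply by $M_{z_i}^*$ and by $P_{\s}=I-P_{\q}$, which turns the $i$-th leg $P_{\q_i}M_z^*$ into $P_{\q_i}M_z^*P_{\q_i^{\perp}}$. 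Both routes are valid and of comparable length; yours avoids the $2^n-1$-term decomposition and keeps the elementary-tensor form visible at every step, while the paper's cross-term argument identifies in one stroke which piece of $\s$ carries the commutator, namely $\q_{\{j\}}^{\perp}$ --- which is precisely the content of your closing observation that the right-hand side annihilates $\q$ and maps into $\q_{\{j\}}^{\perp}\subseteq\s$, a point the paper leaves implicit and you rightly make explicit.
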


\begin{proof} Let $\s= (\q_1\ot\cdots\ot\q_n)^{\perp}$ be a
co-doubly commuting submodule of $\clh$. By definition $R_{z_l} =
M_{z_l}|_{\cls}$ and hence $R_{z_l}^* = P_{\cls} M_{z_l}^*|_{\cls}$
for $l = 1, \ldots, n$. Let  $1\le i<j\le n$.
Then
\begin{align*}[R^*_{z_i},R_{z_j}]&= R^*_{z_i} R_{z_j} - R_{z_j} R^*_{z_i}\\
& = P_{\s}M_{z_i}^*M_{z_j}|_{\s}-
P_{\s} M_{z_{j}}P_{\s}M_{z_i}^*|_{\s}\\
&=P_{\s}M_{z_i}^*M_{z_j}|_{\s}-P_{\s}M_{z_j}(I-P_{\s^{\perp}})M_{z_i}^*|_{\s}\\
&=P_{\s}M_{z_j}P_{\s^{\perp}}M_{z_i}^*|_{\s}\\
&= P_{\cls} M_{z_j}P_{\q_1\ot\cdots\ot\q_n}M_{z_i}^* P_{\cls}.
\end{align*}
Combining this with (\ref{qperp}), we have
\[[R^*_{z_i},R_{z_j}] = \Big(\sum_{\emptyset\neq \bm{\lambda} \subseteq\{1,\dots,n\}}
P_{\q_{\bm{\lambda}}^{\perp}}\Big)
M_{z_j}P_{\q_1\ot\cdots\ot\q_n}M_{z_i}^*\Big(\sum_{\emptyset\neq
\bm{\lambda}'\subseteq\{1,\dots,n\}}P_{\q_{\bm{\lambda}'}^{\perp}}\Big).\]
Observe that for each $\bm{\lambda} \neq \{l\}$ and $l \in \{1,
\ldots, n\}$,
\[P_{\q_1\ot\cdots\ot\q_n}M_{z_l}^*P_{\q_{\bm{\lambda}}^{\perp}}=0,\]
and therefore
\[\begin{split}[R^*_{z_i},R_{z_j}]& = \Big(\sum_{\emptyset\neq \bm{\lambda}
\subseteq\{1,\dots,n\}}
P_{\q_{\bm{\lambda}}^{\perp}}\Big)
M_{z_j}P_{\q_1\ot\cdots\ot\q_n}M_{z_i}^*\Big(\sum_{\emptyset\neq
\bm{\lambda}'\subseteq\{1,\dots,n\}}P_{\q_{\bm{\lambda}'}^{\perp}}\Big)\\
& = \sum_{\emptyset\neq \bm{\lambda}, \bm{\lambda}'
\subseteq\{1,\dots,n\}} P_{\q_{\bm{\lambda}}^{\perp}}
M_{z_j}P_{\q_1\ot\cdots\ot\q_n}M_{z_i}^*
P_{\q_{\bm{\lambda}'}^{\perp}} \\ & = P_{\q_{\{j\}}^{\perp}}M_{z_j}
P_{\q_1\ot\cdots\ot\q_n}M_{z_i}^*P_{\q_{\{i\}}^{\perp}}\\
&= P_{\q_1}\ot\cdots\ot \underbrace{\big(P_{\q_i}M_z^*
P_{\q_i^{\perp}}\big)}\limits_{i\textup{-th}} \ot \cdots \ot
\underbrace{\big(P_{\q_j^{\perp}}M_zP_{\q_j}\big)}
\limits_{j\textup{-th}}\ot\cdots \ot P_{\q_n}.
\end{split}\]
This completes the proof.
\end{proof}

We still need a few more definitions about "small commutators" on
Hilbert spaces.

Let $\clh$ be a Hilbert module over $\mathbb{C}[\z]$. Let $\cls$ and
$\clq$ be submodule and quotient module of $\clh$, respectively.
Then $\s$ is said to be \emph{essentially doubly commuting} if
\[[R_{z_i}^*, R_{z_j}]\in \mathcal{K}(\s),\] for $1\le i<j\le n$.
Here $\mathcal{K}(\s)$ denotes the algebra of all compact operators
on $\cls$. Moreover, it is \emph{essentially normal} if $[R_{z_i}^*,
R_{z_j}]\in \mathcal{K}(\s)$ for $1\le i,j\le n$. Similarly a
quotient module $\q$ of a Hilbert module
$\Hil=\Hil_1\ot\cdots\ot\Hil_n$ is \emph{essentially doubly
commuting} if \[[C_{z_i}^*, C_{z_j}]\in \mathcal{K}(\q),\]for all
$1\le i<j\le n$ and it is \emph{essentially normal} if $[C_{z_i}^*,
C_{z_j}]\in \mathcal{K}(\q)$ for $1\le i,j\le n$ (see \cite{JS1}).
Here $R_{z_i}$ and $C_{z_i}$ are as in \eqref{czi}.

Now we can give a characterization of essentially doubly commuting
co-doubly commuting submodules of analytic Hilbert modules over
$\mathbb{C}[\z]$.

\begin{thm}\label{edc} Let $\s=(\q_1\ot\cdots\ot\q_n)^{\perp}$ be a
co-doubly commuting submodule of an analytic Hilbert module
$\Hil=\Hil_1\ot\cdots\ot\Hil_n$ over $\mathbb{C}[\z]$, where $\q_i$
is a quotient module of $\Hil_i$, $i=1,\dots,n$. Then:

\noindent \textup{(i)} For $n=2$, $\s$ is essentially doubly
commuting if and only if $P_{\q_j}M_z^*P_{\q_j^{\perp}}$ is compact
for all
$j=1,2$.\\
\textup{(ii)} For $n>2$, $\s$ is essentially doubly
commuting if and only if $\s$ is of finite co-dimension.
\end{thm}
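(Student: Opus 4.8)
The plan is to use the explicit formula for the cross commutators obtained in the previous theorem, namely
\[
[R_{z_i}^*,R_{z_j}]=P_{\q_1}\ot\cdots\ot\underbrace{P_{\q_i}M_z^*P_{\q_i^{\perp}}}\limits_{i\text{-th}}\ot\cdots\ot\underbrace{P_{\q_j^{\perp}}M_zP_{\q_j}}\limits_{j\text{-th}}\ot\cdots\ot P_{\q_n},
\]
for $1\le i<j\le n$, and to analyze when such a tensor product operator is compact. The basic fact I would invoke is that a nonzero elementary tensor $A_1\ot\cdots\ot A_n$ of bounded operators is compact if and only if each $A_l$ is compact and at least one $A_l$ has finite rank; more precisely, since here each factor $P_{\q_l}$ ($l\neq i,j$) is an orthogonal projection, the tensor product is compact exactly when each such $P_{\q_l}$ has finite rank (i.e. $\q_l$ is finite-dimensional) and both $P_{\q_i}M_z^*P_{\q_i^\perp}$ and $P_{\q_j^\perp}M_zP_{\q_j}$ are compact — with the convention that if the whole operator is $0$ there is nothing to prove.

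For part (i), $n=2$: here there are no ``extra'' projection factors, so $[R_{z_1}^*,R_{z_2}]=P_{\q_1}M_z^*P_{\q_1^\perp}\ot P_{\q_2^\perp}M_zP_{\q_2}$, and this is compact iff each factor is compact (again modulo the trivial zero case), which is the asserted condition. One should also observe $P_{\q_j^\perp}M_zP_{\q_j}$ and $(P_{\q_j}M_z^*P_{\q_j^\perp})^*$ are adjoints of each other up to the roles of $\q_j,\q_j^\perp$, so compactness of $P_{\q_j}M_z^*P_{\q_j^\perp}$ for $j=1,2$ is the single clean statement. For part (ii), $n\ge 3$: fix any pair $i<j$ and look at the factors $P_{\q_l}$ with $l\notin\{i,j\}$; since $n\ge 3$ there is at least one such $l$. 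If $[R_{z_i}^*,R_{z_j}]$ is compact and nonzero, then by the elementary-tensor criterion each $P_{\q_l}$, $l\neq i,j$, must be finite rank, hence $\dim\q_l<\infty$. Running over all pairs $i<j$ and using $n\ge 3$ shows that \emph{every} $\q_l$ is finite-dimensional (each index $l$ is excluded from some pair), so $\q=\q_1\ot\cdots\ot\q_n$ is finite-dimensional, i.e. $\s=\q^\perp$ has finite co-dimension; the one subtlety is the degenerate case where some cross commutator vanishes — but if $[R_{z_i}^*,R_{z_j}]=0$ then (being an elementary tensor of projections times $P_{\q_i}M_z^*P_{\q_i^\perp}$ and $P_{\q_j^\perp}M_zP_{\q_j}$) one of $\q_i,\q_i^\perp$ or $\q_j,\q_j^\perp$ is zero, forcing some $\q_l$ to be all of $\Hil_l$ and the others constrained, and a short case check still yields finite co-dimension or else a contradiction with $\s$ being a proper nonzero submodule; I would handle this cleanly by noting $P_{\q}M_z^*P_{\q^\perp}=0$ on a standard Hilbert module forces $\q\in\{0,\Hil\}$ by Lemma~\ref{lemma1}-type reasoning, or directly. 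Conversely, if $\s$ has finite co-dimension then $\q$ is finite-dimensional, so every factor in the formula for $[R_{z_i}^*,R_{z_j}]$ is a finite-rank operator, whence the commutator is finite rank and in particular compact; this direction is immediate.

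The main obstacle is the ``only if'' direction of (ii): making the elementary-tensor compactness criterion fully rigorous and correctly dispatching the degenerate cases where a cross commutator is zero or where some $\q_i^\perp$ is finite-dimensional rather than $\q_i$. The clean way is to prove the lemma ``a tensor product $B_1\ot\cdots\ot B_n$ of operators with at least one factor equal to a projection of infinite rank is compact only if the tensor product is zero'' and then argue that for $n\ge 3$ each index appears as a non-distinguished factor in at least one pair $(i,j)$, so infinite-dimensionality of any single $\q_l$ would force the corresponding cross commutator to vanish; a separate small argument (using that $\s$ is a proper, nonzero submodule of the standard module $\Hil$) then rules the vanishing out, or shows it already implies the finite-codimension conclusion. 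Once that lemma is in place the rest is bookkeeping over the $\binom{n}{2}$ pairs.
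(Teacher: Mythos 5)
Your approach is exactly the paper's: the printed proof of this theorem is the single sentence ``the proof follows from the above lemma,'' i.e.\ from the cross-commutator formula, so you are supplying precisely the details the authors omit. Your main line is sound: a nonzero elementary tensor is compact only if every factor is compact, a compact projection has finite rank, and for $n\ge 3$ every index occurs as a ``spectator'' factor $P_{\q_l}$ in some pair $(i,j)$, which forces every $\q_l$ to be finite-dimensional; both converse directions are immediate. Two corrections are needed, one cosmetic and one substantive.

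First, your general lemma is misstated: a tensor product $A_1\ot\cdots\ot A_n$ of compact operators is always compact (it is a norm limit of finite-rank operators), so the clause ``and at least one $A_l$ has finite rank'' must be deleted from the ``only if'' direction. What you actually use --- nonzero and compact implies every factor compact, and a compact projection has finite rank --- is correct, so the argument is not damaged, but the lemma as stated is false.

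Second, your dispatch of the degenerate case does not go through, and in fact cannot as described. By Lemma \ref{lemma1} (irreducibility of a standard module), $P_{\q_i}M_z^*P_{\q_i^{\perp}}=0$ exactly when $\q_i\in\{0,\Hil_i\}$, and likewise for $P_{\q_j^{\perp}}M_zP_{\q_j}$. Now take $n=3$, $\q_1=\Hil_1$, $\q_2=\Hil_2$, and $\q_3$ a proper, nonzero, infinite-dimensional quotient module of $\Hil_3$ (in $H^2(\D^3)$, take $\q_3=(\theta H^2(\D))^{\perp}$ for an infinite Blaschke product $\theta$). Then $\s=\Hil_1\ot\Hil_2\ot\q_3^{\perp}$ is a proper, nonzero, co-doubly commuting submodule, every cross commutator vanishes identically --- so $\s$ is doubly commuting, a fortiori essentially so --- and yet $\s$ has infinite codimension. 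So the ``short case check'' yields neither finite codimension nor a contradiction with properness of $\s$; the degenerate case is a genuine defect of the statement of part (ii), which tacitly requires each $\q_i$ to be a proper nonzero subspace of $\Hil_i$ (as in the Hardy-module source \cite{JS1}). Under that hypothesis every factor $P_{\q_i}M_z^*P_{\q_i^{\perp}}$ and $P_{\q_j^{\perp}}M_zP_{\q_j}$ is nonzero, every cross commutator is a nonzero elementary tensor, and your main argument closes with no case analysis at all. The analogous caveat applies to the vanishing cases in part (i).
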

\begin{proof}
The proof follows from the above lemma.
 \end{proof}

If the analytic Hilbert module $\clh$ in the above theorem is
$H^2(\D^n)$, then $P_{\q_j}M_z^*P_{\q_j^{\perp}}$ is a rank one
operator for all quotient modules $\clq_i$ of $H^2(\mathbb{D})$ and
$i = 1, \ldots, n$ (see Proposition 2.3 in \cite{JS1}). In
particular, for $\clh = H^2(\mathbb{D}^2)$, the submodule $\cls =
(\clq_1 \otimes \cle_2)^\perp$ is always essentially doubly
commuting. This result is due to Yang \cite{Y5}. For the Hardy space
$H^2(\mathbb{D}^n)$, Part (ii) was obtained by the third author in
\cite{JS1}.

The next two results becomes a useful variant of the above theorem.
\begin{cor}\label{Cor1}
For $n>2$, let $\s$ be a co-doubly commuting submodule of an
analytic Hilbert module $\Hil=\Hil_1\ot\cdots\ot\Hil_n$ and
$\q=\s^{\perp}(\cong \Hil/\s)$. Then the following are
equivalent.\\
\textup{(i)} $\s$ is essentially doubly commuting.\\
\textup{(ii)} $\s$ is of finite co-dimension.\\
\textup{(iii)} $\q$ is essentially normal.
\end{cor}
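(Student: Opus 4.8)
\textbf{Proof proposal for Corollary \ref{Cor1}.}

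The plan is to obtain the three equivalences from the structure already in place, chiefly Theorem \ref{co-doubly commuting}, Theorem \ref{edc}(ii), and the cross-commutator formula from the preceding theorem. First I would record that since $\s$ is a co-doubly commuting submodule of the analytic Hilbert module $\Hil=\Hil_1\ot\cdots\ot\Hil_n$, Theorem \ref{co-doubly commuting} gives a representation $\s=(\q_1\ot\cdots\ot\q_n)^{\perp}$ for quotient modules $\q_i\subseteq\Hil_i$, and hence $\q=\s^{\perp}=\q_1\ot\cdots\ot\q_n$. The equivalence (i) $\Leftrightarrow$ (ii) is then \emph{exactly} the content of Theorem \ref{edc}(ii) applied with $n>2$, so that half requires essentially no new work beyond citing it.

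For (ii) $\Leftrightarrow$ (iii), the key observation is that $\s$ has finite co-dimension in $\Hil$ precisely when $\q=\q_1\ot\cdots\ot\q_n$ is finite-dimensional. By the product structure this happens iff each $\q_i$ is finite-dimensional. So it suffices to show that $\q$ is essentially normal iff each $\q_i$ is finite-dimensional. One direction is immediate: if each $\q_i$ is finite-dimensional then $\q$ is finite-dimensional, so \emph{every} operator on $\q$ is compact and in particular all commutators $[C_{z_i}^*,C_{z_j}]$ (including $i=j$) are compact, giving essential normality. For the converse I would use that $C_{z_i}$ on $\q=\q_1\ot\cdots\ot\q_n$ is $I_{\q_1}\ot\cdots\ot(P_{\q_i}M_z|_{\q_i})\ot\cdots\ot I_{\q_n}$, so the self-commutator is
\[
[C_{z_i}^*,C_{z_i}] = I_{\q_1}\ot\cdots\ot [ (P_{\q_i}M_z|_{\q_i})^*, P_{\q_i}M_z|_{\q_i}]\ot\cdots\ot I_{\q_n}.
\]
An operator of the form $I\ot A\ot I$ on $\clm\ot\q_i\ot\cln$ is compact iff either $A=0$ or the ambient tensor factors $\clm\ot\cln$ are finite-dimensional, i.e.\ $\prod_{l\ne i}\dim\q_l<\infty$. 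Since $[ (P_{\q_i}M_z|_{\q_i})^*, P_{\q_i}M_z|_{\q_i}]=0$ would force $P_{\q_i}M_z|_{\q_i}$ to be normal on $\q_i$, which for a quotient module of a standard (hence irreducible) one-variable Hilbert module forces $\q_i$ to be one-dimensional, in all cases compactness of $[C_{z_i}^*,C_{z_i}]$ for every $i$ forces $\prod_{l}\dim\q_l<\infty$, i.e.\ $\q$ is finite-dimensional. (I would isolate the one-variable fact — a nonzero compression of $M_z$ on a quotient module of a standard Hilbert module over $\mathbb{C}[z]$ is normal iff the quotient module is $1$-dimensional — as a short lemma, proved using $M_z^*K(\cdot,w)=\bar wK(\cdot,w)$ and irreducibility, or by invoking Lemma \ref{lemma1}.)

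The main obstacle I anticipate is the bookkeeping in the converse of (iii) $\Rightarrow$ (ii): one must rule out the degenerate possibility that the self-commutators $[C_{z_i}^*,C_{z_i}]$ vanish without the corresponding $\q_i$ collapsing, and then combine the compactness conditions across all $i$ to conclude finite-dimensionality of the \emph{whole} product rather than of each factor separately. Organizing this cleanly — perhaps by a contrapositive argument (if some $\q_{i_0}$ is infinite-dimensional, exhibit an $i$ and a non-compact $[C_{z_i}^*,C_{z_i}]$) — is the crux; once that is in hand the chain (i) $\Leftrightarrow$ (ii) $\Leftrightarrow$ (iii) closes immediately.
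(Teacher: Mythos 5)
Your reduction $\q=\s^{\perp}=\q_1\ot\cdots\ot\q_n$, the citation of Theorem \ref{edc}(ii) for (i)$\Leftrightarrow$(ii), and the trivial implication (ii)$\Rightarrow$(iii) are consistent with how the paper presents the corollary (it is offered as a direct variant of Theorem \ref{edc}, with no further argument). The genuine gap is in your (iii)$\Rightarrow$(ii). From $[C_{z_i}^*,C_{z_i}]=I\ot\cdots\ot[C_i^*,C_i]\ot\cdots\ot I$ your dichotomy yields, for each $i$ \emph{separately}, only: either $[C_i^*,C_i]=0$ or $\prod_{l\ne i}\dim\q_l<\infty$. These conditions do not combine to give $\prod_{l}\dim\q_l<\infty$: the mixed case in which exactly one factor $\q_{i_0}$ is infinite dimensional, $[C_{i_0}^*,C_{i_0}]$ is compact but nonzero, and all remaining $\q_l$ are finite dimensional satisfies every one of your compactness requirements and is not excluded by your case analysis. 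This case actually occurs: in $H^2(\D^3)$ take $\q_1=H^2(\D)\ominus BH^2(\D)$ with $B$ an infinite Blaschke product and $\q_2=\q_3=H^2(\D)\ominus zH^2(\D)=\Comp$. Then $C_{z_2}=C_{z_3}=0$, while $C_{z_1}\cong S(B)$ has self-commutator of rank at most two (both defect operators $I-S^*S$ and $I-SS^*$ are rank one), so $\q=\q_1\ot\Comp\ot\Comp$ is an essentially normal, doubly commuting quotient module of infinite dimension whose orthocomplement is a co-doubly commuting submodule of infinite codimension. Hence the self-commutator bookkeeping cannot be closed as you propose; any correct treatment must go through the cross-commutators of the submodule $\s$ (the tensor formula for $[R_{z_i}^*,R_{z_j}]$, which is what Theorem \ref{edc} rests on) together with nondegeneracy restrictions on the factors $\q_i$ that rule out examples of the above kind — indeed the example shows that (iii)$\Rightarrow$(ii), read literally with the paper's definition of essential normality of $\q$, is not recoverable from your argument at all.

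A secondary issue: the one-variable lemma you rely on — that a normal compression $P_{\q_i}M_z|_{\q_i}$ on a quotient module of a standard Hilbert module over $\Comp[z]$ forces $\dim\q_i=1$ — is asserted, not proved. Lemma \ref{lemma1} concerns orthogonal projections commuting with $C_z$ and does not yield it, and irreducibility of $M_z$ alone does not either; for $H^2(\D)$ one can prove it using that $S(\theta)$ is a completely non-unitary contraction with rank-one defects, but in the generality of standard Hilbert modules (for instance weighted Bergman quotient modules) it needs an argument you have not supplied. Even granting this lemma, the combination step above still fails, so the proposed route to (iii)$\Rightarrow$(ii) does not go through.
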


\begin{cor}\label{Cor2}
Let $\s$ be an essentially normal co-doubly commuting submodule of
an analytic Hilbert module $\Hil=\Hil_1\ot\cdots\ot\Hil_n$. If $\s$ is of
infinite co-dimension, then $n=2$.
\end{cor}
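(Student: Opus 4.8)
The plan is to derive Corollary~\ref{Cor2} directly from Corollary~\ref{Cor1}. First I would unwind the definitions: an essentially normal submodule is in particular essentially doubly commuting, since essential normality requires $[R_{z_i}^*,R_{z_j}]\in\clk(\cls)$ for \emph{all} $1\le i,j\le n$, which trivially includes the range $1\le i<j\le n$ demanded by essential double commutativity. Thus the hypothesis of Corollary~\ref{Cor2} gives us an essentially doubly commuting co-doubly commuting submodule $\cls$ of $\clh=\clh_1\ot\cdots\ot\clh_n$.

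Next I would argue by contradiction on the value of $n$. Suppose $n>2$. Then Corollary~\ref{Cor1} applies, and its equivalence (i)$\Leftrightarrow$(ii) forces $\cls$ to have finite co-dimension in $\clh$. But this contradicts the standing assumption that $\cls$ has infinite co-dimension. Hence $n>2$ is impossible; since the excerpt fixes $n\ge 2$ throughout, the only remaining possibility is $n=2$. This completes the proof.

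The main point to be careful about, rather than a genuine obstacle, is the logical direction: Corollary~\ref{Cor1} is stated under the blanket hypothesis ``$n>2$,'' so I must phrase the argument as ``if $n>2$ then Corollary~\ref{Cor1} gives finite co-dimension, contradiction,'' and not try to invoke it when $n=2$. One should also note that the step using Corollary~\ref{Cor1} only needs essential double commutativity of $\cls$, so the essential normality hypothesis is (intentionally) stronger than strictly necessary for this deduction; the statement is nonetheless correct as written. No computation is involved beyond this chain of implications, so the proof is short.

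\begin{proof}
Since $\cls$ is essentially normal, we have $[R_{z_i}^*,R_{z_j}]\in\clk(\cls)$ for all $1\le i,j\le n$, and in particular for all $1\le i<j\le n$; hence $\cls$ is essentially doubly commuting. Now suppose, for contradiction, that $n>2$. Then by Corollary~\ref{Cor1}, the conditions (i) and (ii) are equivalent, so $\cls$ being essentially doubly commuting implies that $\cls$ is of finite co-dimension in $\clh$. This contradicts the hypothesis that $\cls$ has infinite co-dimension. Therefore $n>2$ cannot hold, and since $n\ge 2$ throughout, we conclude $n=2$.
\end{proof}
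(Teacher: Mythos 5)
Your proof is correct and follows exactly the route the paper intends: essential normality trivially gives essential double commutativity, and for $n>2$ Corollary~\ref{Cor1} (equivalently Theorem~\ref{edc}(ii)) forces finite co-dimension, contradicting the hypothesis, so $n=2$. The paper leaves this deduction implicit, and your write-up fills it in with the same argument.
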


In the case $\clh = H^2(\mathbb{D}^n)$, both the Corollaries
\ref{Cor1} and \ref{Cor2} were obtained by the third author in
\cite{JS1}.

\section{Wandering subspaces and ranks of submodules}

In this section we investigate the existence of wandering subspace,
in the sense of Halmos \cite{H}, of a co-doubly commuting submodule
of $L^2_{a, \bm{\alpha}}(\D^n)$, and compute the rank of a co-doubly
commuting submodules of $H^2(\mathbb{D}^n)$. In particular, we
explicitly compute the rank of a co-doubly commuting submodule $\s$
of $H^2(\mathbb{D}^n)$ and prove that the rank of $\cls$ is not
greater than $n$.

We begin with the definition of wandering subspaces for submodules
of analytic Hilbert modules over $\mathbb{C}[\z]$.

Let $\s$ be a submodule of an analytic Hilbert module $\clh$ over
$\mathbb{C}[\z]$ and $\mathcal{W}\subseteq\s$ be a closed subspace.
Then $\clw$ is a \emph{wandering subspace} of $\cls$ if
\[\mathcal{W}\perp M_{\z}^{\bm{k}}\mathcal{W},\] for all $\bm{k} \in
\Nat^{n}\setminus \{0\}$ and \[\s = \bigvee_{\bm{k} \in
\mathbb{N}^n} M_z^{\bm{k}} \clw .\]

Let $\s$ be a submodule of $H^2(\mathbb{D})$, or $L^2_a(\D)$. Then
$\mathcal{W}=\s\ominus z\s$ is the wandering subspace of $\s$.
Moreover, the dimension of $\clw$ is always one for $\clh =
H^2(\mathbb{D})$ \cite{B}, and any value in the range $1, 2, \ldots,
\infty$, for $\clh = L^2_a(\mathbb{D})$ \cite{ABFP}. For a general
$n$, the existence of wandering subspaces of doubly commuting
submodules of $L^2_a(\D^n)$ is obtained in \cite{bergman1} and
\cite{bergman2}.

Now let $\s=(\q_1\ot\cdots\ot\q_n)^{\perp}$ be a co-doubly commuting
submodule of $L^2_{a, \bm{\alpha}}(\D^n)$, where
$\bm{\alpha}=(\alpha_1,\ldots, \alpha_n) \in \Nat^n$, and $\q_i$ is
a quotient module of $L^2_{a, \alpha_i}(\D)$, $i=1, \dots, n$. Let
\[\mathcal{W}_i=(\q_i^{\perp}\ominus z\q_i^{\perp}),\] be the wandering
subspace of $\q_i^{\perp}$ for $i=1,\dots,n$. Consider the set
\[
\mathcal{W}=\bigvee_{i=1}^n 1\ot\cdots\ot1\ot
\mathcal{W}_i\ot1\cdots\ot 1\subseteq \s.
\]
By virtue of Theorem \ref{co-doubly commuting}, it then follows
easily that \[\s = \bigvee_{\bm{k}\in \mathbb{N}^n} M_z^{\bm{k}}
\mathcal{W}.\]There is, however, no guarantee that $\mathcal{W}
\perp M_{z}^{\bm{k}} \mathcal{W}$ for all $\bm{k} \in
\Nat^{n}\setminus\{\mathbf{0}\}$. For instance, it is not
necessarily true that \[\langle 1\ot f_2\ot1\ot \cdots\ot1, f_1\ot
M_{z}1\ot 1\ot\cdots\ot 1\rangle = 0,\] for all $f_1
\in\mathcal{W}_1$ and $f_2 \in\mathcal{W}_2$.

\noindent However, if we further assume that $1\in\q_i$ for all
$i=1,\dots,n$, it then easily follows that $\mathcal{W}$ is a
wandering subspace of $\cls$. Thus we have the following result on
the existence of wandering subspaces of a class of co-doubly
commuting submodules of $L^2_{a, \bm{\alpha}}(\D^n)$.

\begin{thm}
Let $\bm{\alpha} = (\alpha_1, \ldots, \alpha_n) \in \Nat^n$ and
$\q_i$ be a quotient module of $L^2_{a, \alpha_i}(\D)$ and $1 \in
\clq_i$, $i=1,\ldots, n$. Then
\[
\mathcal{W}=\bigvee_{i=1}^n 1\ot\cdots\ot 1\ot
\mathcal{W}_i\ot1\cdots\ot 1
\]
is a wandering subspace of the co-doubly commuting submodule
$\s=(\q_1\ot\cdots\ot\q_n)^{\perp}$,
where\[\mathcal{W}_i=(\q_i^{\perp}\ominus z\q_i^{\perp}),\] for
$i=1,\dots,n$.
\end{thm}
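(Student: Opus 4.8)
The plan is to verify the two defining properties of a wandering subspace directly, using the tensor product structure established in Theorem \ref{co-doubly commuting} together with the one-variable wandering subspace decompositions $\q_i^{\perp} = \bigvee_{l\ge 0} z^l \mathcal{W}_i$ and the crucial extra hypothesis $1 \in \q_i$. First I would record the generation property: since $\s = (\q_1 \ot \cdots \ot \q_n)^{\perp} = \sum_{i=1}^n \Hil_1 \ot \cdots \ot \q_i^{\perp} \ot \cdots \ot \Hil_n$ by Theorem \ref{co-doubly commuting}, and each $\q_i^{\perp} = \bigvee_{l \ge 0} z^l \mathcal{W}_i = \bigvee_{l\ge 0} M_z^l \mathcal{W}_i$ (the one-variable Beurling/Bergman-type wandering subspace decomposition, valid because $\Hil_i = H^2(\D)$ or $L^2_{a,\alpha_i}(\D)$), applying $M_z^l$ on the $i$-th factor and $M_{z}^{k_j}$ on the other factors, one generates the full summand $\Hil_1 \ot \cdots \ot \q_i^{\perp} \ot \cdots \ot \Hil_n$ from $1 \ot \cdots \ot \mathcal{W}_i \ot \cdots \ot 1$. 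Summing over $i$ gives $\s = \bigvee_{\bm{k}\in\Nat^n} M_{\z}^{\bm{k}} \mathcal{W}$, which as the text notes holds even without the hypothesis $1\in\q_i$.

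The heart of the argument is the orthogonality condition $\mathcal{W} \perp M_{\z}^{\bm{k}} \mathcal{W}$ for all $\bm{k} \in \Nat^n \setminus \{\bm{0}\}$. By bilinearity it suffices to check, for each pair $i, j \in \{1,\dots,n\}$ and each $\bm{k} \neq \bm{0}$, that
\[
\langle\, \underbrace{1\ot\cdots\ot f_i \ot\cdots\ot 1}_{i\text{-th}},\ M_{\z}^{\bm{k}}\big(\underbrace{1\ot\cdots\ot g_j\ot\cdots\ot 1}_{j\text{-th}}\big)\,\rangle = 0
\]
for all $f_i \in \mathcal{W}_i$, $g_j \in \mathcal{W}_j$. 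Because the ambient module is a Hilbert tensor product, this inner product factors as a product of $n$ one-variable inner products. I would split into cases according to how $i$, $j$ and the support of $\bm{k}$ interact. If $i = j$ and $k_i \ge 1$: the $i$-th factor is $\langle f_i, z^{k_i} g_i\rangle = 0$ since $\mathcal{W}_i \perp z^l \mathcal{W}_i$ for $l \ge 1$; every other factor is of the form $\langle 1, z^{k_m}\cdot 1\rangle$ which is finite and the whole product vanishes. If $i = j$ and $k_i = 0$ but $k_m \ge 1$ for some $m \ne i$: the $m$-th factor is $\langle 1, z^{k_m}\cdot 1\rangle_{\Hil_m} = 0$ by the reproducing property at $0$ (equivalently orthogonality of $1$ to $z\Hil_m$), killing the product. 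If $i \ne j$: the $i$-th factor is $\langle f_i, z^{k_i}\cdot 1\rangle$ and the $j$-th factor is $\langle 1, z^{k_j} g_j\rangle$. Here the hypothesis $1 \in \q_i$ enters: since $f_i \in \mathcal{W}_i = \q_i^{\perp} \ominus z\q_i^{\perp} \subseteq \q_i^{\perp}$ and $1 \in \q_i$, we get $\langle f_i, 1\rangle = 0$; moreover $\langle f_i, z^{k_i}\cdot 1\rangle = 0$ for $k_i \ge 1$ as well because $z^{k_i}\cdot 1 \in \Hil_i$ and — wait, one must be careful, $z^{k_i}\cdot 1$ need not lie in $\q_i$. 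The clean way: if $k_j \ge 1$ then the $j$-th factor $\langle 1, z^{k_j} g_j\rangle$ is handled by noting $g_j \in \q_j^{\perp}$, so $z^{k_j} g_j \in \q_j^{\perp}$ (as $\q_j^{\perp}$ is a submodule), hence $\langle 1, z^{k_j} g_j\rangle = 0$ precisely because $1 \in \q_j$; if $k_j = 0$ then since $\bm{k}\neq\bm{0}$ some $k_m \ge 1$, and if $m = i$ the $i$-th factor is $\langle f_i, z^{k_i}\cdot 1 \rangle$ — reducing again to needing $z^{k_i}\cdot 1 \perp f_i$, which I would get from $f_i \in \q_i^{\perp}$ and $z^{k_i}\cdot 1 \in \q_i$ when $1 \in \q_i$ (since $\q_i$ is \emph{not} a submodule this is the subtle point) — alternatively, if $m \ne i, j$ the $m$-th factor $\langle 1, z^{k_m}\cdot 1\rangle = 0$ finishes it.

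The main obstacle is exactly the case $i \ne j$, $\bm{k}$ supported only at $i$ (i.e. $\bm{k} = k_i e_i$ with $k_i \ge 1$), where I need $\langle f_i, z^{k_i}\cdot 1\rangle_{\Hil_i} = 0$. Since $\q_i$ is a quotient module, not a submodule, $z^{k_i}\cdot 1$ need not lie in $\q_i$, so the bare hypothesis $1 \in \q_i$ does not immediately give this. However, $\mathcal{W}_i = \q_i^{\perp} \ominus z\q_i^{\perp}$, and for $k_i \ge 1$ write $z^{k_i}\cdot 1 = z \cdot (z^{k_i - 1}\cdot 1)$; since $\q_i^{\perp}$ is $M_z$-invariant, $z\q_i^{\perp} \subseteq \q_i^{\perp}$, and $\mathcal{W}_i \perp z\q_i^{\perp}$. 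So it suffices that $z^{k_i-1}\cdot 1 \in \q_i^{\perp}$ — but that is generally false. The correct resolution, which I would present, is: $f_i \in \mathcal{W}_i \subseteq \q_i^{\perp} = \s_i$ where $\s_i$ is a submodule, so $f_i \perp \q_i$, and in fact $f_i \perp z^l\cdot 1$ for all $l\ge 0$ fails in general — so instead I observe that for the full inner product to be nonzero we need \emph{every} one-variable factor nonzero, and in the case $\bm{k} = k_i e_i$ the $j$-th factor is $\langle 1, g_j\rangle_{\Hil_j}$ with $g_j \in \mathcal{W}_j \subseteq \q_j^{\perp}$ and $1 \in \q_j$, giving $\langle 1, g_j\rangle = 0$; this uses $1\in\q_j$ and closes the case without touching the $i$-th factor. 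Thus in every case at least one factor vanishes by one of the two mechanisms: orthogonality within a one-variable wandering decomposition ($\mathcal{W}_i \perp z^l\mathcal{W}_i$), orthogonality of $1$ to $z\Hil_m$, or orthogonality of $1 \in \q_j$ to $\mathcal{W}_j \subseteq \q_j^{\perp}$. Assembling the case analysis and invoking $\s = \bigvee M_{\z}^{\bm{k}}\mathcal{W}$ from the first paragraph completes the proof that $\mathcal{W}$ is a wandering subspace of $\s$.
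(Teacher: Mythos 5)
Your proof is correct and follows the same route as the paper, which itself only remarks that the orthogonality ``easily follows'' from $1\in\q_i$; your case analysis supplies exactly the missing details, and the final resolution of the troublesome case ($i\neq j$, $\bm{k}=k_ie_i$) via the $j$-th factor $\langle 1,g_j\rangle=0$ is the right one. Two minor remarks: the whole case $i\neq j$ collapses to the single observation that the $j$-th factor $\langle 1, z^{k_j}g_j\rangle$ vanishes for \emph{every} $k_j\ge 0$, since $z^{k_j}g_j\in\q_j^{\perp}$ (a submodule) while $1\in\q_j$, so the meandering detour through $\langle f_i, z^{k_i}\cdot 1\rangle$ can be deleted; and, exactly as the paper does, you are tacitly invoking the one-variable wandering subspace property $\q_i^{\perp}=\bigvee_{l\ge 0}z^l\mathcal{W}_i$ for submodules of $L^2_{a,\alpha_i}(\D)$ in the generation step, which is the only external input the argument needs.
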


We now study the rank of a co-doubly commuting submodule of an
analytic Hilbert module over $\mathbb{C}[\z]$. Recall that the
\textit{rank} of a Hilbert module $\clh$ over $\mathbb{C}[\z]$ is
the smallest cardinality of its generating sets \cite{DP}. More
precisely,
\[
\mbox{rank}(\clh) = \min_{E\in \clg(\clh)} \# E,\]where \[
\clg(\clh)=\{E\subseteq \Hil: \bigvee_{\bm{k} \in \mathbb{N}^n}
M_z^{\bm{k}} E =\Hil\}.
\]

%In the rest of this section we focus on co-doubly commuting
%submodules of the Hardy module $H^2(\D^n)$.

Let $\cls = \theta H^2(\mathbb{D})$ be a submodule of
$H^2(\mathbb{D})$ for some inner function $\theta \in
H^\infty(\mathbb{D})$ \cite{B}. Then \[\cls = \theta H^2(\mathbb{D})
= \bigvee_{m \geq 0} z^m E,\]where $E = \{\theta\}$. Consequently,
$\cls$ is of rank one. This is no longer true for Hardy space over
$\mathbb{D}^n$ and $n \geq 2$. As pointed out by Rudin \cite{R},
there exists a submodule $\s$ of $H^2(\mathbb{D})$ such that the
rank of $\s$ is not finite (see also \cite{III}, \cite{Se} and
\cite{SY}). We now consider the class of co-doubly commuting
submodules of $H^2(\D^n)$.

Let $\s$ be a non-trivial proper co-doubly commuting submodule of
$H^2(\D^n)$. Theorem \ref{co-doubly commuting} implies that there
exists non-zero quotient modules $\q_1,\dots,\q_n$ of $H^2(\D)$ such
that
\[
\s=(\q_1\ot\cdots\ot\q_n)^{\perp}= \sum_{i=1}^n
H^2(\D^{i-1})\ot\q_i^{\perp}\ot H^2(\D^{n-i}).
\]
Then there exists a natural number $1 \leq m \leq n$ such that
$\clq_{l_j} \neq H^2(\mathbb{D})$,r $j = 1, \ldots, m$. Let
$\theta_{l_j}$ be the inner function corresponding to the non-zero
submodule $\q_{l_j}^{\perp}$, that is,
\[\q_{l_j}^{\perp}=\theta_{l_j}H^2(\D). \quad \quad (j=1,\dots,m)\] Let
$E$ be the set of one variable inner functions corresponding to
$\{\theta_{l_j}\}_{j=1}^m$ over $\mathbb{D}^n$, that is,
\[E:=\{\Theta_{l_j}\in\s: \Theta_{l_j}=1\ot\cdots\ot
1\ot\underbrace{\theta_{l_j}}\limits_{l_j\text{-th}} \ot
1\ot\dots\ot 1, j=1,\dots,m\}. \] Then invoking again Theorem
\ref{co-doubly commuting} we conclude that
\[\bigvee_{\bm{k} \in \Nat^{n}} M_{z}^{\bm{k}} E = \s.\]Consequently, \[\mbox{rank} (\s) \le m.\] If,
in addition, we assume that $1\in\q_i$, for $1\le i\le n$ then
\[\Theta_{l_j} \in \mbox{ker} P_{\s} M_{z}^{* \bm{k}},\quad \quad (1\le j\le
m)\]for any non-zero $\bm{k} \in\Nat^{n}$. Then with a standard
argument we obtain $\mbox{rank} (\s)\ge m$ and hence $\mbox{rank}
(\s) = m$.

We summarize the results given above as follows.
\begin{thm}
Let $\s=(\q_1\ot\cdots\ot\q_n)^{\perp}$ be a co-doubly commuting
submodule of $H^2(\D^n)$. Then the rank of $\s$ is less than or
equal to the number of quotient modules $\q_i$ which are different
from $H^2(\D)$. Moreover, equality holds if $1\in\q_i$ for all $1\le
i\le n$.
\end{thm}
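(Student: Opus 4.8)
The plan is to establish the two inequalities $\mathrm{rank}(\s)\le m$ and $\mathrm{rank}(\s)\ge m$ separately, where $m$ denotes the number of indices $i$ for which $\q_i\ne H^2(\D)$. For the upper bound, first apply Theorem \ref{co-doubly commuting} to write $\s=\sum_{i=1}^n H^2(\D^{i-1})\ot\q_i^{\perp}\ot H^2(\D^{n-i})$; only those summands with $\q_i\ne H^2(\D)$ (equivalently $\q_i^{\perp}\ne\{0\}$) contribute. Since $\clh_i=H^2(\D)$, each such $\q_i^{\perp}$ is a Beurling-type submodule $\theta_{l_j}H^2(\D)$ for an inner function $\theta_{l_j}$, so that $\bigvee_{k\ge 0}z^k\theta_{l_j}=\q_{l_j}^{\perp}$. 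Forming the set $E=\{\Theta_{l_j}:j=1,\dots,m\}$ of the one-variable inner functions viewed in the appropriate slot of the tensor product, I would check that $\bigvee_{\bm k\in\Nat^n}M_z^{\bm k}E$ already fills out each summand $H^2(\D^{i-1})\ot\q_i^{\perp}\ot H^2(\D^{n-i})$ (the extra coordinate shifts supply the missing Hardy-module factors, and the $z$-shift in the $l_j$-th slot supplies $\q_{l_j}^{\perp}$), hence equals $\s$ by Theorem \ref{co-doubly commuting}. Therefore $\#E=m$ exhibits a generating set and $\mathrm{rank}(\s)\le m$.

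For the lower bound, under the extra hypothesis $1\in\q_i$ for all $i$, I would argue that any generating set of $\s$ must have at least $m$ elements. The key observation is that each generator $\Theta_{l_j}$ is, for every nonzero multi-index $\bm k\in\Nat^n$, killed by the compression $P_{\s}M_z^{*\bm k}$; this uses $1\in\q_i$ to see that $M_{z_i}^*\Theta_{l_j}$ lands in $\q^{\perp}$ in the wrong slots, so its $\s$-component vanishes. Consequently the ``defect space'' $\s\ominus\big(\bigvee_{\bm k\ne 0}M_z^{\bm k}\s\big)$—or more precisely the natural analogue adapted to the several-variable setting—contains the $m$-dimensional span of the $\Theta_{l_j}$'s and is in fact exactly that span. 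A standard argument then shows that any generating set of $\s$ must project onto a spanning set of this defect space, forcing $\mathrm{rank}(\s)\ge m$. Combining the two bounds gives $\mathrm{rank}(\s)=m$ under the stated condition.

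The main obstacle I anticipate is the lower-bound direction: making precise the ``standard argument'' that reduces counting generators to computing the dimension of the appropriate wandering/defect subspace. In the one-variable Hardy case this is classical (Beurling), but in $n$ variables the module multiplication operators are not doubly isometric in a way that immediately yields a clean Wold-type decomposition, so one must be careful about which analogue of $\s\ominus z\s$ to use. The hypothesis $1\in\q_i$ is exactly what is needed to keep the relevant orthogonality relations intact (cf. the wandering-subspace theorem preceding this statement), and I would lean on that: once one knows the $\Theta_{l_j}$ are mutually orthogonal, lie in $\s$, and are each orthogonal to $M_z^{\bm k}\s$ for all $\bm k\ne 0$, a minimal generating set must have cardinality at least the number of such independent ``wandering'' vectors, namely $m$. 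The remaining verifications—that the $\q_i^{\perp}$ are singly generated over $\mathbb C[z]$ by $\theta_{l_j}$, and that the tensor-slot bookkeeping in the first paragraph is correct—are routine given Theorem \ref{co-doubly commuting} and the Beurling theorem for $H^2(\D)$.
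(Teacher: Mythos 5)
Your proposal is correct and follows essentially the same route as the paper: the upper bound via Theorem \ref{co-doubly commuting} and the generators $\Theta_{l_j}$ built from the Beurling inner functions $\theta_{l_j}$, and the lower bound (under $1\in\q_i$) via $\Theta_{l_j}\in\ker P_{\s}M_{z}^{*\bm{k}}$ for $\bm{k}\neq 0$ followed by projecting any generating set onto the span of the mutually orthogonal $\Theta_{l_j}$ -- which is exactly the ``standard argument'' the paper leaves implicit. Your stronger side claim that the defect space equals $\Span\{\Theta_{l_1},\dots,\Theta_{l_m}\}$ is not needed (and not verified); only the containment and linear independence enter the count.
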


\section{Concluding Remarks}

It is worth stressing here that the results of this paper are based
on three essential assumptions on the Hilbert module $\clh$:

\begin{enumerate}
\item $\clh$ is a reproducing kernel Hilbert module over
$\mathbb{D}^n$. Moreover, the kernel function $K_{\clh}$ of $\clh$
is a product of one variable kernel functions over the unit disk
$\mathbb{D}$. That is, \[K_{\clh}(\z, \w) = \prod_{i=1}^n K_i(z_i,
w_i). \quad \quad (\z, \w \in \mathbb{D}^n)\]

\item $\clh$ is a standard reproducing kernel Hilbert module, that
is, there does not exists a pair of non-zero orthogonal quotient
modules of $\clh_{K_i} \subseteq \clo(\mathbb{D}, \mathbb{C})$,
where $\clh_{K_i}$ is the reproducing kernel Hilbert module
corresponding to the kernel $K_i$ and $i = 1, \ldots, n$.

\item $K^{-1}_{\clh}$ is a polynomial, or, that $\clh$ admits a
$\frac{1}{K}$-calculus, in the sense of Arazy and Englis.

\end{enumerate}

The purpose of the following example is to show that the conclusion
of Theorem \ref{analytic module} is false if we drop the assumption
that $\clh$ is standard.

Let $\Hil=\Hil_1\ot\cdots\ot\Hil_n$ be a reproducing kernel Hilbert
module over $\mathbb{C}[\z]$ such that $\Hil_1$ is not a standard
reproducing kernel Hilbert module over $\mathbb{C}[z]$. This implies
$\Hil_1$ has two orthogonal quotient modules $\q_1$ and $\q_1'$. Now
consider the following quotient module of $\Hil$
\[
\q= (\q_1\ot\q_2\ot\q_3\ot\cdots\ot\q_n)\oplus
(\q_1'\ot\q_2'\ot\q_3\ot\cdots\ot \q_n),
\]
for two different quotient modules $\q_2$ and $\q_2'$ of $\Hil_2$
and some quotient modules $\q_i$ of $\Hil_i$, $i=3,\dots,n$. Then it
is evident that $\q$ is a doubly commuting quotient module of $\Hil$
but it can not be represented in the form of tensor product of $n$
one variable quotient modules. Therefore one may ask the following
general question.

Is every doubly commuting quotient module of a Hilbert module over
$\mathbb{C}[\z]$ orthogonal sum of quotient modules each of which is
Hilbert tensor product of one variable quotient modules?

\noindent\textbf{Acknowledgment:} The first and second authors are
grateful to Indian Statistical Institute, Bangalore Centre for warm
hospitality. The first author also thanks NBHM for financial
support.

\end{document}